\DeclareMathOperator{\SE}{SE}
\DeclareMathOperator{\Emb}{Emb}
\DeclareMathOperator{\Diff}{Diff}
\DeclareMathOperator{\vol}{vol}
\DeclareMathOperator{\graph}{graph}
\DeclareMathOperator{\discrete}{discrete}
\newcommand{\bulk}{\ensuremath{\aquarius}}
\newcommand{\Flow}{\ensuremath{\mathrm{Flow}}}
\newcommand{\flow}{\ensuremath{\mathrm{flow}}}
\newcommand{\lb}{\ensuremath{\langle}}
\newcommand{\rb}{\ensuremath{\rangle}}
\newcommand{\pder}[2]{\ensuremath{\frac{\partial #1}{\partial #2}}}
\newcommand{\body}{\ensuremath{\mathcal{B}}}
\newtheorem{prop}{Proposition}
\newtheorem{corr}{Corrollary}
\begin{document}
\title{Swimming as a limit cycle}

\author{Henry O. Jacobs}

\maketitle

\begin{abstract}
  Steady swimming can be characterized as both periodic and stable.  These characteristics are the very definition of limit cycles, and so we ask ``Can we view swimming as a limit cycle?''  In this paper we will find that the answer is ``yes''.  We will define a class of dissipative systems which correspond to the passive dynamics of a body immersed in a Navier-Stokes fluid (i.e. the dynamics of a dead fish).  Upon performing reduction by symmetry we will find a hyperbolically stable fixed point which corresponds to the stability of a dead fish in stagnant water.  Given a periodic force on the shape of the body we will invoke the persistence theorem to assert the existence of a loop which approximately satisfies the exact equations of motion.  If we lift this loop with a phase reconstruction formula we will find that the lifted loops are not loops, but stable trajectories which represent regular periodic motion reminiscent of swimming.
\end{abstract}
\tableofcontents

\pagebreak
\section{Introduction}
  The motion of steady swimming appears periodic \cite{Shadwick1998}.  Thus the question:
  \vspace{2ex}
  \begin{center}
  	\it Can we reasonably interpret steady swimming as a limit cycle?
   \end{center}
   \vspace{2ex}
  In this paper we will assert that the answer to this question is ``yes''.  But why would we want to interpret swimming as a limit cycle?  Well firstly, a limit-cycle interpretation builds upon an existing body of knowledge derived from experimental and computational observations \cite{Alben_Shelley_2005}, \cite{Liao2003a}.  Moreover, this interpretation could be of interest to control engineers because robust and regular behavior reduces the complexity of controllers.

\begin{figure}[h]
\centering
	\includegraphics[ width = 0.7\textwidth]{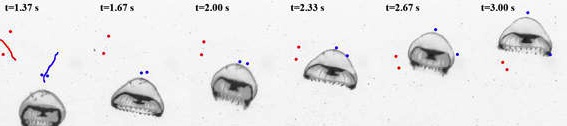}
	\caption{The periodic motion of a jellyfish.  Photo taken from \cite{Katija2011} courtesy of Kakani Katija Young.} \label{fig:jellyfish}
\end{figure}

\paragraph{\bf Main Contributions}
  We will understand the system consisting of a body immersed in a fluid as a dissipative system evolving on a space $\mathcal{A}$.  One observes the system is invariant with respect to the group of rigid rotations and translations, $\SE(d)$.  This observation suggests that one can describe the system evolving on the quotient manifold $\frac{\mathcal{A}}{\SE(d)}$.  The main contributions of this paper are:
\begin{itemize}
	\item We prove the existence of a hyperbolic stable point in the quotient space $\frac{\mathcal{A}}{\SE(d)}$.
	\item We then add a periodic potential energy which models the periodic contraction of muscles.  If $\mathcal{A}$ were finite dimensional, then adding this periodic perturbation would transform a hyperbolic stable point into a stable limit cycle.  However, $\mathcal{A}$ is not finite dimensional.  Nonetheless, we will prove the existence of loops in $\frac{\mathcal{A}}{\SE(d)}$ which approximately satisfy the equations of motion to arbitrarily good accuracy.
	\item We prove that the dynamics in $\mathcal{A}$ which corresponds to loops in $\frac{\mathcal{A}}{\SE(d)}$ are given by regular periodic motions, where each period is related to the previous by a rigid rotation and translation.
\end{itemize}
   
   This all suggests that orderly behavior such as steady swimming in periodically perturbed systems could be \emph{the rule and not the exception}.
   
   In greater detail, the paper proceeds as follows. We will describe the motion of a flexible body immersed in a Navier-Stokes fluid as a dissipative system on a tangent bundle $TQ$.  This passive, or unactuated system, will be referred to as the ``dead fish'' system to specialize the terminology to the situation at hand.  We will then find this systems exhibits two symmetries.  The first symmetry corresponds to the invariance with respect to the particle-labeling of the fluid and is represented by a Lie group $G$.  Reduction by $G$ will send us from dynamics on $TQ$ to dynamics on $(TQ) / G$.  The second symmetry is invariance with respect to global rotations and translations of the system, and will send us from dynamics on $(TQ)/G$ to $\frac{(TQ) / G}{\SE(d)}$.  On this final space we will find a hyperbolic stable point.  We will then add a periodic force which acts on the shape of the body to model periodic muscle contractions of a fish.  Then we will attempt to use the persistence theorem to obtain a stable limit cycle in this periodically perturbed system.  I say ``attempt'' because we will fail.  To get past this failure we will be aim for a more modest goal and successfully use persistence theorem to assert the existence of a loop which approximately satisfies the exact equations of motion (to an arbitrarily good accuracy).  Upon lifting a given loop in $\frac{(TQ) / G}{\SE(d)}$ to $(TQ)/G$ we will find regular motion whereupon each period of the perturbation is a constant rigid rotation and translation of the previous period.  Such behavior is reminiscent of steady swimming.  Schematically, this idea is represented via the commutative diagram

\begin{align}
\begin{CD}
\text{dead fish} @> \text{periodic pertubation} >>  \text{swimming} \\
@V{\pi_{\SE(3)\backslash}}VV @VV{\pi_{\SE(3)\backslash }}V\\
\frac{ \text{dead fish} }{\SE(3)} @> \text{periodic perturbation} >> \frac{\text{perturbed dead fish}}{\SE(3)}.
\end{CD} \label{cd:1}
\end{align}

The top left corner of diagram \eqref{cd:1} represents a dead fish in water as a dissipative mechanical system.  If we reduce the system by $\SE(3)$ we will get a system on a quotient space symbolized by the bottom left corner of diagram \eqref{cd:1}.  In this bottom left corner we find a hyperbolic stable point which asserts ``a motionless body in stagnant water is a stable state for dead fish''.  The horizontal arrows of diagram \eqref{cd:1} correspond to the addition of a time periodic pertubation on the shape of the body.  This perturbation produces a stable limit cycle in the bottom right corner of diagram \eqref{cd:1} via the persistence theorem\footnote{There are analytic concerns however, and we deal with them in \S\ref{sec:asy}}.  Finally, the existence of this periodic orbit in the bottom right corner of diagram \eqref{cd:1} implies the existence of a family of curves which one could reasonably refer to as ``swimming'' in the top right corner of \eqref{cd:1}.

\subsection{ Background Material}
 There presently exists a substantial body of knowledge in the form of computational and biological experiments which are consistent with and support the hypothesis that swimming could be interpreted as a limit cycle. For example,  experiments involving tethered unactuated (i.e. dead) fish immersed in a flow behind a bluff body suggest an ability to passively harvest energy from the surrounding vorticity of the flow.  The same studies also provide a relevant example of oscillatory behavior as a stable state for an unactuated system (e.g. that of a dead fish) \cite{Liao2003a}, \cite{Liao2003b}.
 
   For the case of live fish, periodic motor neuron actuation has been recorded directly and periodic internal elastic forces have been approximated via linear elasticity models \cite{Shadwick1998}.  Additionally, numerical experiments involving rigid bodies with oscillating forces suggest that uniform motion (i.e. flapping flight) is an attracting state for certain pairs of frequencies and Reynolds numbers \cite{Alben_Shelley_2005}.  Finally, experiments involving flexible paper in vertical oscillating flows suggest that the vortices shed have the apparent result of stabilizing a top-heavy body despite steady state analysis which would suggest instability \cite{LiuRistroph2012}.  This body of knowledge suggests further investigation into the role of non-stationary flows in steady swimming via undulatory motion.  In particular, we will be investigating non-stationary flows which result from periodic perturbations of muscle fibers.
 
   In order for us to study non-stationary flows we must have a language which is efficient so that our simple idea is expressed with equal simplicity.  In addition, this language must be rich enough to express fluid-solid systems.  We find that the language of gauge theory is a particular good choice.  Specifically, we will invoke the standard bundle structure of guage theory by setting the gauge symmetry equal to the particle relabeling symmetry of the fluid.  This was first investigated in detail for the case of propulsion in Stokes fluids \cite{ShapereWilczek1989} and potential flow in ideal fluids \cite{Kelly1998} and \cite{Kelly2000}.  In these gauge theories, the shape of the body was controlled explicitly and the locomotion which resulted was seen as a consequence of the curvature of a physically meaningful principal connection.  The framework of \cite{Kelly1998} for arbitrary bodies was specialized to articulated bodies in \cite{Kanso2005} and used for motion planning in \cite{Melli2006}.  More recently,\cite{Vankerschaver2009} was able to extend \cite{Kelly2000} to flow with point vortices with the use of Lagrange-Poincar\'{e} reduction as performed in \cite{CeMaRa2001}.  Finally, recent work has expanded this picture to Euler and Navier-Stokes fluids with arbitrary circulation through the use of Lie groupoids \cite{JaVa2012} in analogy with the classic work of V I Arnold on Lie groups \cite{Arnold1966}.

\subsection{Organization and Motivation of our approach}
  The objective of this paper is to demonstrate that swimming can be interpreted as a limit cycle.  Specifically, we will show that this interpretation may be viewed as a periodic pertubation of the stable state corresponding to a dead fish in stagnant water.  Therefore, we will begin this paper with a discussion in \S\ref{sec:limit_cycles} on limit cycles and demonstrate how a periodically perturbed hyperbolic stable points yields a limit cycle.  Because the first step in this journey is the identification of a hyperbolic stable point, we will seek such a stable point in the realm of fluid-solid systems.  This search for stable points begins with the Lagrangian mechanical description of fluids (\S \ref{sec:passive fluids}) and solids (\S \ref{sec:passive solids}) and finally fluid-solid interaction (\S \ref{sec:passive fsi}).  However, despite having the Lagrangian formalism specified we will not find a hyperbolic stable  point until we have made further preparations.  Our system is too large and there are a number of redundancies which must be removed.  The two redundancies which will be ``quotiented away'' are the particle labels of the fluid  (\S \ref{sec:passive fsi}) and the frame of reference (\S \ref{sec:SE}).   Finally, our quest for a hyperbolic stable point comes to an end in section \S \ref{sec:asy} where we identify one in the quotient space.  With this fixed point we will be able to use the ``persistence theorem'' to assert the existence of loops which approximate the exact dynamics on the quotient space.  Finally, in \S \ref{sec:swimming} we will apply a phase reconstruction formula lift these loops to unquotiented spaces.  The lifted loops will no longer be loops, but stable trajectories which represent regular periodic motion, generally reminiscent of swimming.

\subsection{Conventions and Notation}
  All objects and morphisms will be assumed to be sufficiently smooth.  Moreover, we will not address the existence or uniqueness of solutions for fluid structure systems.  In particular we will assume that solutions exist for all time.  If $M$ is a smooth manifold then we will denote the tangent bundle by $\tau_M :TM \to M$.  The set of vector fields on $M$ will be denoted by $\mathfrak{X}(M)$.  More generally, given a fiber-bunde $\pi:E \to M$, we denote the set of sections of $E$ by $\Gamma(E)$.  We will denote the unit circle by $S^1$ and the Lie group of rigid rotations and translations of $
\mathbb{R}^d$ by $\SE(d)$.

\subsection{Acknowledgements}
The italicized question in the second line of the introduction was posed to me by my wife, Erica J. Kim, while she was studying humming birds.  Additionally, Sam Burden, Ram Vasudevan, and Humberto Gonzales provided much insight into how to frame this work for engineers.  I would also like to thank Professor Shankar Sastry for allowing me to stay in his lab for a year and meet these people who are outside of my normal research circle.  At the same time, the original version of this paper was written in the context of Lie groupoid theory, where the guidance of Alan Weinstein was invaluable.  Much of the groupoid theory is hidden in the present publication, and I hope to reveal the groupoid theory explicitly at a later date.  Jaap Eldering and Joris Vankerschaver have given me more patience than I may deserve by reading my papers and checking my claims.  The final draft of this paper was solidified with the help of Darryl Holm by multiple long meetings going through the paper one line at a time.  This research has been supported by the European Research Council Advanced Grant 267382 FCCA and NSF grant CCF-1011944.

\section{Limit Cycles} \label{sec:limit_cycles}
  Let $M$ be a finite dimensional Banach manifold and let $X \in \mathfrak{X}(M)$.  Moreover, assume we have found a hyperbolically stable fixed point $x_0 \in M$.  Then we may embed this autonomous dynamical system in the time periodic augmented phase space $S^1 \times M$ with the coordinates $(t,x)$ by using the vector field $(\partial_t ,X) \in \mathfrak{X}( S^1 \times M)$.  If we do this then the loop
  \begin{align}
  	\Gamma_0 = \{ ( t, x_0 ) : t \in S^1 \} \subset S^1 \times M \label{eq:cycle}
  \end{align}
  is a hyperbolically stable limit cycle of $(\partial_t,X)$ (see figure \ref{fig:limit_cycle} B).
  \begin{figure}[h]
     \centering
     \includegraphics[width=4in]{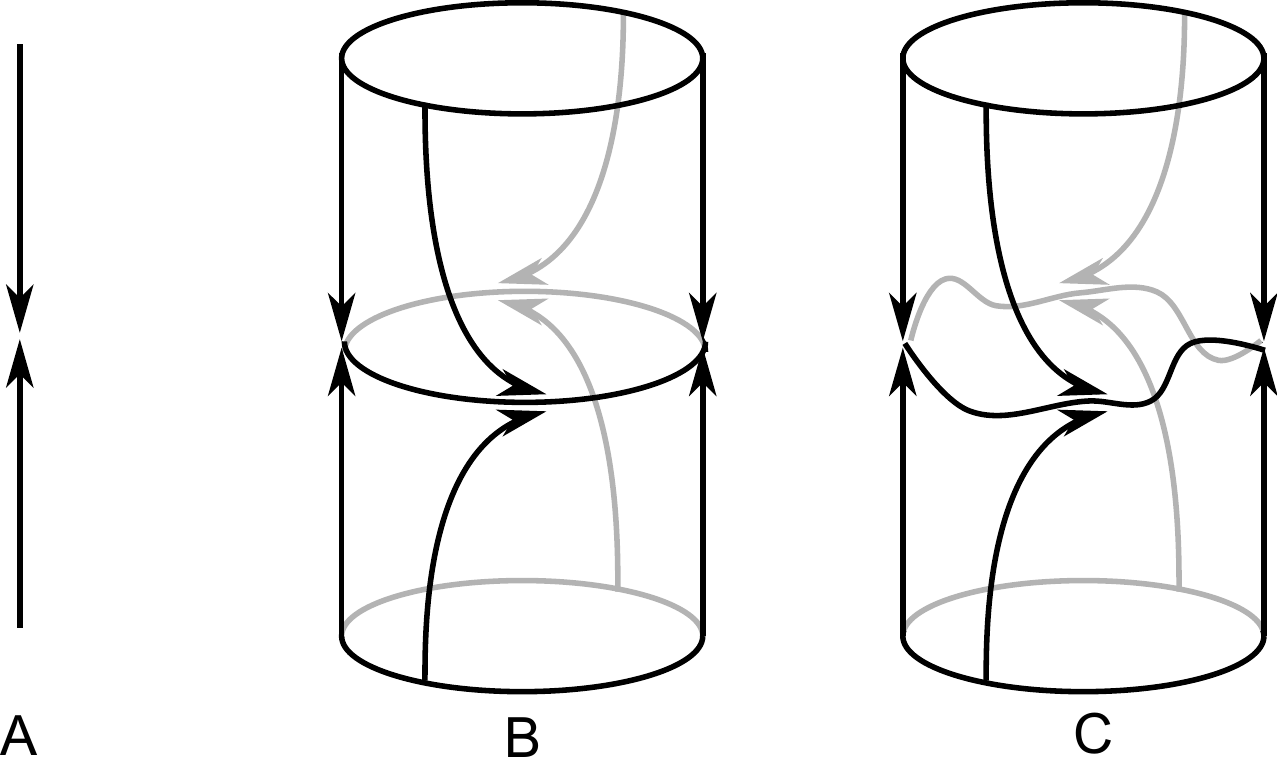} 
     \caption{ (A) Depicts a one dimensional system with a hyperbolic stable point.  The embedding of the system in (A) into time-periodic augmented phase space exhibits a limit cycle as shown in (B).  Finally, a limit cycle will persist under a time-periodic perturbation as a diffeomorphism of the original limit cycle.  This is drawn in (C). }
     \label{fig:limit_cycle}
  \end{figure}
  In fact, $\Gamma_0$ is a special case of a normally hyperbolically invariant submanifold.  Therefore under sufficiently small perturbations $(\partial_t , X) \mapsto  (\partial_t,X+ \delta\!X)$ there will still exist a hyperbolically stable limit cycle, $\Gamma_{\epsilon}$, in the perturbed system which is close to $\Gamma_0$.  This is a consequence of the \emph{persistence theorem} \cite{Fenichel1971,Hirsch77}.

  It is notable that the first step in going down this path is to find a hyperbolic stable point.  We will do this for the case of a dead fish as follows.  In the next section we will derive the equations of motion for a dead fish as an instance of the Lagrange-d'Alembert equations on a tangent bundle $TQ$.  We will then find two symmetries of the system.  One corresponding to a Lie group $G$ and another corresponding to the Lie group $\SE(d)$.  Upon performing reduction by symmetry we will obtain dynamics on the quotient space
  \[
  	[TQ/G] : = \frac{TQ/G}{\SE(d)}
  \]
  we will find that the vector field on $[TQ/G]$ has a (weak) hyperbolic stable point, $x_0 \in [TQ/G]$.  We could then construct a (weak) hyperbolic stable limit cycle given by equation \eqref{eq:cycle} with $M = [TQ/G]$.
  
  Unfortunately, $Q$ is an infinite dimensional Fr\'{e}chet manifold.  Therefore the persistence theorem does not directly apply.  However, we will be able to assert the existence of a loop which approximates the exact dynamics.  Using a phase reconstruction formula we will be able to solve for the motion of the fish.

\section{Passive Dynamics}  \label{sec:passive}
In this section we construct a Lagrangian formalism for fluid structure interaction.  Recall that given a Lagrangian, $L:TQ \to \mathbb{R}$, the equations of motion of the corresponding Lagrangian system are given in coordinates by the \emph{Euler-Lagrange equations}
\[
	\frac{D}{Dt} \left( \pder{L}{\dot{q}} \right) - \pder{L}{q} = 0.
\]
\emph{Hamilton's principle} then state that a solution curve, $q: [0 , T] \to Q$, between $q_0 = q(0)$ and $q_1 = q(T)$ satisfies the Euler-Lagrange equations if and only if $q( \cdot )$ extremizes the action
\[
	S[ q( \cdot ) ] = \int_{0}^{T}{ L( q , \dot{q}) dt }
\]
with respect to variations of the curve $q(\cdot)$ with fixed end points.  In otherwords, $\delta S=0$ with respect to variations $\delta q(t)$ with $\delta q(0) = 0$ and $\delta q (T) = 0$.  More generally, we may consider a force field, $F: TQ \to T^{\ast}Q$, and consider the Lagrange-d'Alembert equations
\[
	\frac{D}{Dt} \left( \pder{L}{\dot{q}} \right) - \pder{L}{q} = F.
\]
which are equivalent to the Lagrange-d'Alembert variational principle
\[
	\delta S = \int_0^{T}{ \lb F(q,\dot{q}) , \delta q \rb dt }
\]
with respect to variations $\delta q$ with fixed end points \cite[Chapter 7]{MandS}.
  If $Q$ is equipped with a Riemanian metric, $\lb \cdot , \cdot \rb_Q: TQ \oplus TQ \to \mathbb{R}$, then it is customary to consider Lagrangians of the form
  \[
  	L(q,\dot{q}) = \frac{1}{2}  \lb \dot{q} ,\dot{q}\rb_Q - U(q)
  \]
  where $U: Q \to \mathbb{R}$.  We call such a Lagrangian a \emph{kinetic minus potential Lagrangian}.  The Euler-Lagrange equations in this case will take the form
  \[
  	\frac{D \dot{q}}{Dt} = \nabla U(q(t))
  \]
  Where $\frac{D}{Dt}$ is the Levi-Cevita covariant derivative and $\nabla U$ is the gradient of $U$ \cite[Proposition 3.7.4]{FOM}.  Moreover, given the force $F: TQ \to T^{\ast}Q$, the Lagrange-D'Alembert equations take the form
  \[
  	\frac{D \dot{q}}{Dt} = \nabla U(q) + \sharp \left( F(q,\dot{q}) \right)
  \]
  where $\sharp : T^{\ast}Q \to TQ$ is the sharp operator induced by the metric.  In the following subsections we will illustrate how to understand a body immersed in a fluid as a kinetic-minus-potential Lagrangian system with a dissipative force field in the above sense.

\subsection{Fluids} \label{sec:passive fluids}
  Consider the manifold $\mathbb{R}^3$ with the standard flat metric and coordinates $x,y,z$.  The flat metric induces the volume form $d\vol = dx \wedge dy \wedge dz$.  One can consider the infinite dimensional Lie group of volume preserving diffeomorphisms, $\Diff_{\vol} (\mathbb{R}^3)$, where the group multiplication is simply the composition of diffeomorphisms. The configuration of a fluid flowing on $\mathbb{R}^3$ relative to some reference configuration is described by an element $\varphi \in \Diff_{\vol}(\mathbb{R}^3)$.  Given a curve $\varphi_t \in \Diff_{\vol}(\mathbb{R}^3)$ one can differentiate it to get a tangent vector $\dot{\varphi}_t = \frac{d}{dt} \varphi_t \in T \Diff_{\vol}(\mathbb{R}^3)$.  One can interpret $\dot{\varphi}$ as a map from $\mathbb{R}^3$ to $T\mathbb{R}^3$ by the natural definition $\dot{\varphi}(x) = \frac{d}{dt} \varphi_t(x)$.  Therefore, a tangent vector, $\dot{\varphi} \in T \Diff_{\vol}(\mathbb{R}^3)$, over a diffeomorphism $\varphi \in \Diff_{\vol}(\mathbb{R}^3)$ is nothing but a smooth map $\dot{\varphi}: \mathbb{R}^3 \to T\mathbb{R}^3$ such that $\tau_{\mathbb{R}^3} \circ \dot{\varphi} = \varphi$ where $\tau_{\mathbb{R}^3} : T\mathbb{R}^3 \to \mathbb{R}^3$ is the tangent bundle projection.  This implies that $\dot{\varphi} \circ \varphi^{-1}$ is a smooth divergence free vector field on $\mathbb{R}^3$.  We call $\dot{\varphi}$ the \emph{material} representation of the velocity, while $\dot{\varphi} \circ \varphi^{-1} \in \mathfrak{X}_{\vol}(\mathbb{R}^3)$ is the spatial representation (see figure \ref{fig:reps}).
  \begin{figure}[htbp] 
     \centering
     \includegraphics[width=3in]{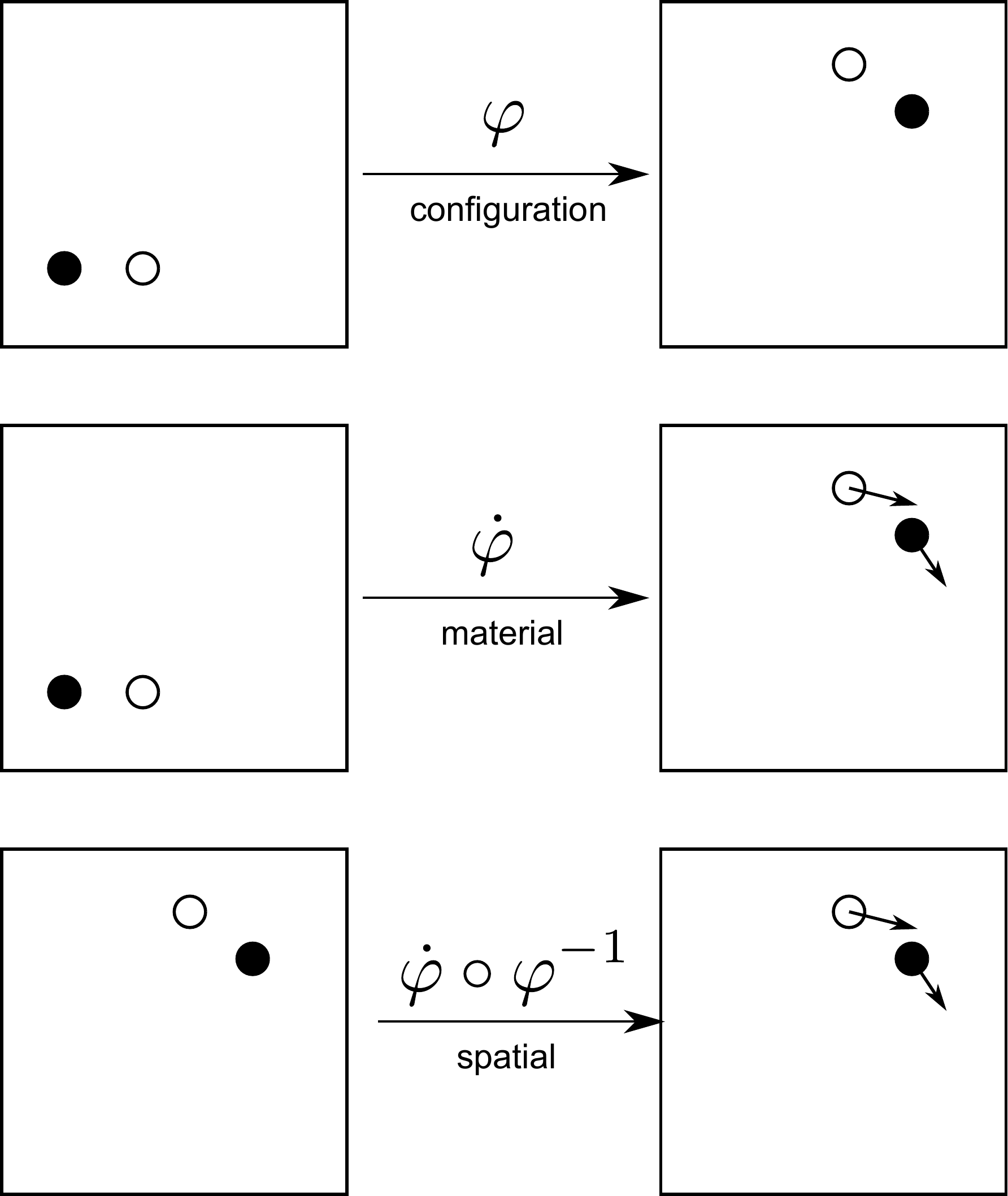} 
     \caption{A cartoon comparing the material vs spatial representation of fluids}
     \label{fig:reps}
  \end{figure}
   The Lagrangian, $L: T( \Diff_{\vol}(\mathbb{R}^3) ) \to \mathbb{R}$, is the kinetic energy of the fluid,
  \[
  	L( \varphi, \dot{\varphi}) := \frac{1}{2} \int_{M}{ \| \dot{\varphi}(x) \|^2 d\vol}.
  \]
  One can derive the Euler-Lagrange equations on $\Diff_{\vol}(\mathbb{R}^3)$ with respect to the Lagrangian $L$ to obtain the equations of motion for an ideal fluid.  However, this Lagrangian exhibits a symmetry.
  \begin{prop}[\cite{Arnold1966}]
  	The Lagrangian $L$ is symmetric with respect to the right action $\Diff_{\vol}(\mathbb{R}^3)$ on $T \Diff_{\vol}(\mathbb{R}^3)$.
\end{prop}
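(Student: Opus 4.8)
The plan is to show that $L$ is invariant under the tangent lift of right translation. First I would pin down the action precisely. For $\psi \in \Diff_{\vol}(\mathbb{R}^3)$, right translation $R_\psi : \varphi \mapsto \varphi \circ \psi$ is a diffeomorphism of the group, and its tangent lift $TR_\psi$ acts on $T\Diff_{\vol}(\mathbb{R}^3)$. Using the excerpt's identification of a tangent vector $\dot\varphi$ over $\varphi$ with a map $\dot\varphi : \mathbb{R}^3 \to T\mathbb{R}^3$ satisfying $\tau_{\mathbb{R}^3} \circ \dot\varphi = \varphi$, I claim the lifted action sends $(\varphi,\dot\varphi)$ to $(\varphi\circ\psi,\dot\varphi\circ\psi)$. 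To justify this I would differentiate a curve: if $\varphi_t$ is a curve through $\varphi$ with velocity $\dot\varphi$, then $R_\psi(\varphi_t)=\varphi_t\circ\psi$ has velocity $\frac{d}{dt}\big|_{t=0}(\varphi_t\circ\psi)(x)=\dot\varphi(\psi(x))$, i.e. $\dot\varphi\circ\psi$. A quick consistency check confirms this is a legitimate tangent vector over $\varphi\circ\psi$, since $\tau_{\mathbb{R}^3}\circ(\dot\varphi\circ\psi)=(\tau_{\mathbb{R}^3}\circ\dot\varphi)\circ\psi=\varphi\circ\psi$.

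With the action in hand, the second step is a direct computation:
\[
  L(\varphi\circ\psi,\dot\varphi\circ\psi) = \frac{1}{2}\int_{\mathbb{R}^3} \| \dot\varphi(\psi(x)) \|^2 \, d\vol(x).
\]
I would then change variables $y=\psi(x)$. The essential point --- and the only place a hypothesis is used --- is that $\psi$ is volume preserving, so its Jacobian determinant is identically one and $\psi^{\ast} d\vol = d\vol$; hence the integral is unchanged and equals $\frac{1}{2}\int_{\mathbb{R}^3}\|\dot\varphi(y)\|^2\, d\vol(y) = L(\varphi,\dot\varphi)$. This establishes $L\circ TR_\psi = L$ for every $\psi$, which is exactly the claimed right-invariance.

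The main obstacle is not analytic depth but care in the infinite-dimensional bookkeeping: one must confirm that ``the right action of $\Diff_{\vol}$ on $T\Diff_{\vol}$'' means the tangent lift of right composition rather than some other action, and one must be willing to treat the change-of-variables formula on the Fr\'echet manifold $\Diff_{\vol}(\mathbb{R}^3)$ at the level of rigor adopted in the Conventions, where smoothness and the validity of such pointwise integral manipulations are assumed. Under those conventions the argument is essentially a one-line change of variables. Conceptually, the cleanest way to see the result is that the same substitution $y=\varphi(x)$ rewrites $L$ as $\frac{1}{2}\int_{\mathbb{R}^3}\|v(y)\|^2\, d\vol(y)$ in terms of the spatial velocity $v=\dot\varphi\circ\varphi^{-1}\in\mathfrak{X}_{\vol}(\mathbb{R}^3)$, which is manifestly unchanged under $\dot\varphi\mapsto\dot\varphi\circ\psi$, $\varphi\mapsto\varphi\circ\psi$; thus $L$ descends to the spatial representation and is blind to a volume-preserving relabeling of the reference configuration.
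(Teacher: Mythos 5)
Your proof is correct and follows essentially the same route as the paper: identify the right action on $T\Diff_{\vol}(\mathbb{R}^3)$ as $(\varphi,\dot\varphi)\mapsto(\varphi\circ\psi,\dot\varphi\circ\psi)$, then compute $L(\dot\varphi\circ\psi)$ by the change of variables $y=\psi(x)$, using volume preservation ($\psi^{\ast}d\vol=d\vol$, unit Jacobian) to conclude invariance. Your added justification of the tangent-lifted action via differentiating curves, and the closing remark that $L$ descends to the spatial velocity $\dot\varphi\circ\varphi^{-1}$, are sound refinements of the same argument rather than a different approach.
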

\begin{proof}
  Let $\dot{\varphi} \in T \Diff_{\vol}(\mathbb{R}^3)$ over the diffeomorphism $\varphi \in \Diff_{\vol}(\mathbb{R}^3)$.  Morever let $\psi \in \Diff_{\vol}(\mathbb{R}^3)$.  Then the map $\dot{\varphi} \circ \psi$ is a vector in $T \Diff_{\vol}(\mathbb{R}^3)$ over the point $\varphi \circ \psi$.  This defines a right Lie group action of $\Diff_{\vol}(\mathbb{R}^3)$ on $T \Diff_{\vol}(\mathbb{R}^3)$.  We find
  \begin{align*}
  	L(\dot{\varphi} \circ \psi) &= \frac{1}{2} \int_{M}{ \| \dot{\varphi} \circ \psi(x) \|^2 d\vol } \\
		&= \frac{1}{2} \int_{\psi^{-1}(M)}{ \| \dot{\varphi}(x)\|^2 \psi^*d \vol } \\
		&= \frac{1}{2} \int_{M}{ \| \dot{\varphi}(x) \|^2 \det( \psi) d \vol }\\
		&= \frac{1}{2} \int_{M}{ \| \dot{\varphi}(x) \|^2 d \vol} \\
		&= L( \dot{\varphi})
  \end{align*}
  which illustrates that $L$ is symmetric with respect to the natural (right) action of $\Diff_{\vol}(\mathbb{R}^3)$ on $T \Diff_{\vol}(\mathbb{R}^3)$.
\end{proof}
   Noether's theorem asserts the existence of a conserved quantity as a result of the symmetry of $L$.  This conserved quantity is the circulation of the fluid and is known as Kelvin's circulation theorem.  Additionally, this symmetry suggests we can write equations of motion on the quotient space $T \Diff_{\vol}(\mathbb{R}^3) / \Diff_{\vol}(\mathbb{R}^3)$ which one can identify with the set of divergence free vector fields, $\mathfrak{X}_{\vol}(\mathbb{R}^3)$.  It was the discover of \cite{Arnold1966} that these equations could be written as
  \[
  	\pder{u}{t} + u \cdot \nabla u = - \nabla p \quad , \quad \mathrm{div}(u) = 0.
  \]
  One should recognize as these as the inviscid fluid equations.   Moreover, if we define the linear map, $f_\mu: \mathfrak{X}_{\vol}(\mathbb{R}^3) \to \mathfrak{X}_{\vol}^*(\mathbb{R}^3)$
  \[
  	\lb f_\mu(u) , w \rb = \mu \int_{M}{ \lb \Delta u , w \rb d \vol}
  \]
  then we derive the Lagrange-D'Alemnbert equations by lifting $f$ to obtain a force field $F: T (\Diff_{\vol}(\mathbb{R}^3)) \to T^{\ast}( \Diff_{\vol}(\mathbb{R}^3))$.  If we do this, then $u(t)$ satisfies the Navier-Stokes equations
  \[
  	\pder{u}{t} + u \cdot \nabla u = - \nabla p - \mu \Delta u \quad , \quad \mathrm{div}(u) = 0.
  \]
  Of course there is no reason that we must restrict ourselves to the case of fluids on $\mathbb{R}^3$.  We can do virtually the same procedure on $\mathbb{R}^2$ or any finite dimensional Riemannian manifold for that matter.  We refer to \cite{ArKh1992} for details on these generalizations.

\subsection{Solids} \label{sec:passive solids}
  Let $\body$ be a compact manifold with boundary $\partial \body$ and volume form $d\vol_{\body}$.  Let $\Emb(\body)$ denote the set of embeddings of $\body$ into $\mathbb{R}^d$.  If $\Emb(\body)$ is non-empty then it is an infinite dimensional manifold which serves as the configuration manifold for the theory of elasticity.   For example, the theory of linear elasticity assumes $\body$ to be a Riemannian manifold with metric, $\lb \cdot , \cdot \rb_{\body}$ and uses the potential energy
  \[
  	U_{\mathrm{linear}}(b) = \frac{1}{2} \int_{b(\body)}{ \mathrm{trace}\left( [I - C_b]^T \cdot [I - C_b] \right) b_*d\vol_{\body}}.
  \]
  where $C_b$ is the right Cauchy-Green strain tensor \cite{MFOE}.  This is just an example and we will \emph{not} need our potential energy to be of this form, but this would hold the properties required for the main theorem of our paper.  For our purposes, the most important property of $U$ is (left) $\SE(d)$ invariance and an isolated minima in shape space with a nondegenerate second variation.  Let us first discuss what we mean by $\SE(d)$ invariance.  Note that the standard action of $\SE(d)$ on $\mathbb{R}^d$ induces a left action on $\Emb(B)$ by composition of maps.  Specifically, given a $z \in \SE(d)$ and a $b \in \Emb(\body)$ we define the embedding $z \cdot b \in \Emb(\body)$ by
  \[
  	(z \cdot b)(x) = z \cdot ( b(x) ) \quad , \forall x \in \body.
  \]
  To say that $U$ is $\SE(d)$ invariant means  $U( z \cdot b ) = U(b)$ for all $z \in \SE(3)$.  This implies the existence of a function $[U] : [\Emb(B)]  \to \mathbb{R}$ where $[\Emb(B)] := \SE(d) \backslash \Emb(B)$ is the quotient space, also referred to as the \emph{shape space}.  We will assume that $[U]$ has an isolated minimizing shape $s_{\min} = \operatorname{argmin}( [U] ) \in [\Emb(B)]$ and that the second variation $\delta^2 [U]$ is nondegenerate at $s_{\min}$.  Note that $s_{\min}$ is simultaneously a point in $[\Emb(B)]$ and a submanifold of $\Emb(B)$.  The Lagrangian is given by
  \[
  	L_{\body} (b,\dot{b}) := \frac{1}{2} \int_{\body}{ \rho_{\body}(x) \| \dot{b}(x) \|^2 d \vol_\body (x) } - U(b).
  \]
  From $L_{\body}$ one may compute the equations of motion for an elastic body moving in a vacuum.  Such a system would be conservative, and perhaps unrealistic.  To amend this we will include a dissipative force, which is given by a vector bundle map, $F_{\body}: T [ \Emb(\body)] \to T^{\ast} [\Emb(\body)]$, such that the tensor
\[
	\lb F_{\body}( \cdot ) , \cdot \rb : T[ \Emb(\body)] \oplus T[\Emb(\body) ] \to \mathbb{R}
\]
is a (weakly) positive definite form on $[\Emb(\body)]$.  Such a force has the effect of dampening the rate of change in the shape of the body but it will not dampen motions induced by the action of $\SE(d)$.  In other words, we assume that a jiggling body eventually comes to rest at a shape $s_{\min}$ by the dissipation of energy.

\subsection{Fluid-solid interaction} \label{sec:passive fsi}
  Let $\body$, $\Emb(\body)$, $L_{\body}$, $F_{\body}$ be as described in the previous section.  Given an embedding $b \in \Emb(\body)$ let $\bulk_{b}$ denote the set
  \[
  	\bulk_{b} = \mathrm{closure} \left\{ \mathbb{R}^d \backslash b\left( \body \right) \right\}.
  \]
  The set $\bulk_{b}$ is the region which is occupied by the fluid given the embedding of the body $b$.  If the body configuration is given by $b_0 \in \Emb(\body)$ at time $t=0$ and $b_1 \in \Emb(\body)$ at time $t=1$, then the configuration of the fluid is given by a volume preserving morphism from $\bulk_{b_0}$ to $\bulk_{b_1}$, i.e. an element of $\Diff_{\vol}\left( \bulk_{b_0}, \bulk_{b_1} \right)$.  Given a reference configuration $b_0 \in \Emb(\body)$ for the body we define the configuration manifold
  \begin{align*}
  	Q := \{ (b , \varphi) \quad \vert \quad & b \in \Emb(\body),\\
		& \varphi \in \Diff_{\vol}\left( \bulk_{b_0}, \bulk_{b} \right)\\
		 & \lim_{x \to \infty}( \varphi(x) - x ) = 0 \}.
  \end{align*}
  One should not that the manifold $Q$ has some extra structure.  In particular, the Lie group $G := \Diff_{\vol}( \bulk_{b_0} )$, represents the symmetry group for the set of particle labels and acts on $Q$ on the right by sending
  \[
  	(b,\varphi) \in Q \mapsto (b,\varphi \circ \psi) \in Q
  \]
  for each $\psi \in G$ and $(b,\varphi) \in Q$.

  \begin{figure}
    \centering
    \includegraphics[width=4in]{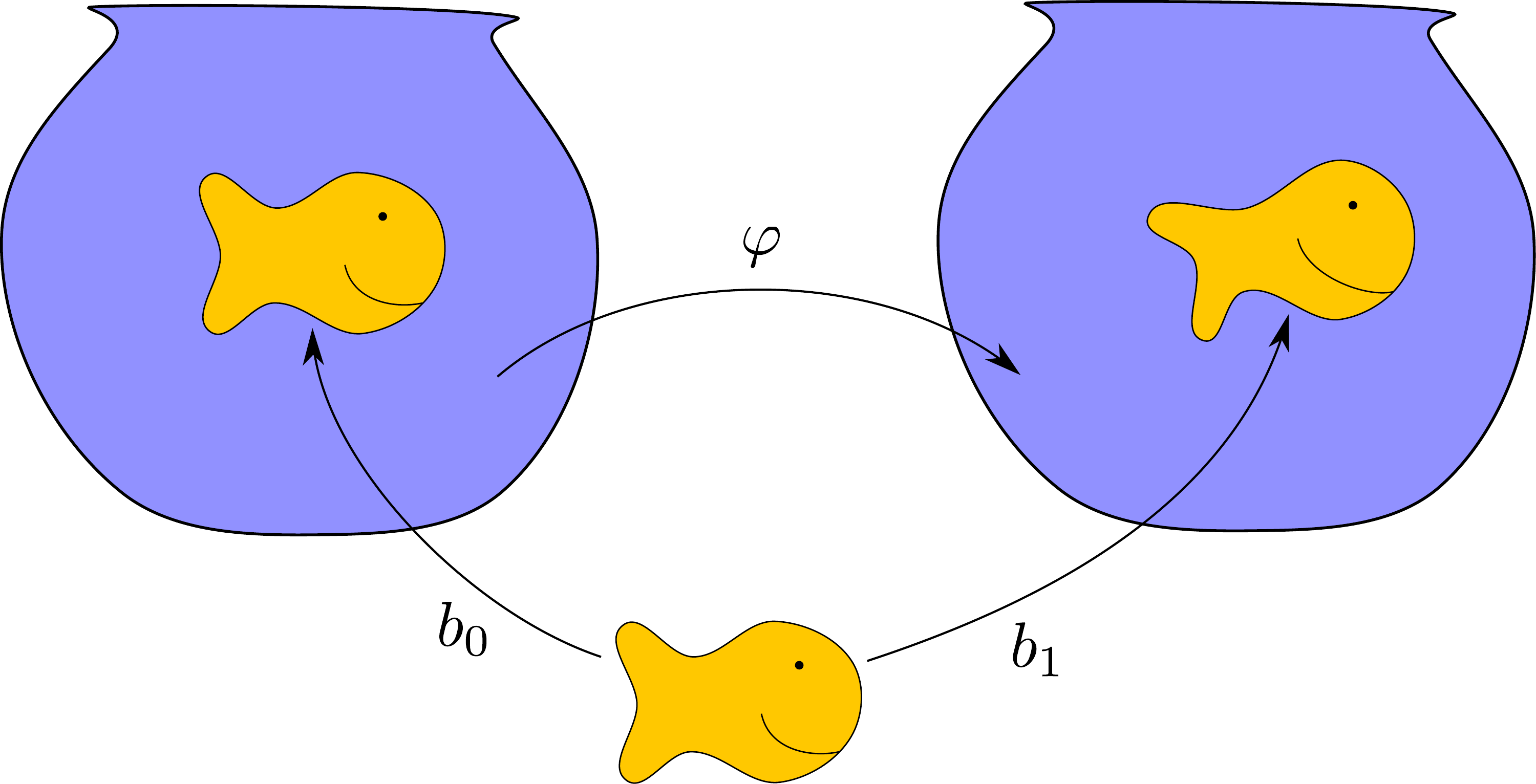}
    \end{figure}

  \begin{prop} \label{prop:Q}
  	The projection $\beta:Q \to \Emb(\body)$ defined by $\beta(b,\varphi) = b$ makes $Q$ into a principal $G$-bundle over $\Emb(\body)$.
  \end{prop}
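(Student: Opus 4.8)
The plan is to verify the defining properties of a principal $G$-bundle in turn: that the right $G$-action is free and fibre-preserving, that each fibre of $\beta$ is a single $G$-orbit, and finally --- the analytic heart of the matter --- that $\beta$ admits smooth local sections, from which local triviality follows automatically. The algebraic content I would dispose of quickly. Fibre-preservation is immediate, since $\beta((b,\varphi)\cdot\psi) = \beta(b,\varphi\circ\psi) = b$. Freeness follows from invertibility: if $\varphi\circ\psi = \varphi$ then composing with $\varphi^{-1}$ gives $\psi = \mathrm{id}$. For transitivity on fibres, given $(b,\varphi_1),(b,\varphi_2)\in\beta^{-1}(b)$, the composite $\psi := \varphi_1^{-1}\circ\varphi_2$ is a volume-preserving diffeomorphism of $\bulk_{b_0}$ with $\varphi_1\circ\psi = \varphi_2$; one checks that $\psi$ inherits the asymptotic condition $\lim_{x\to\infty}(\psi(x)-x)=0$ (the inverse and composite of maps asymptotic to the identity are again asymptotic to the identity), so $\psi\in G$. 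Hence each fibre is a $G$-torsor.

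Next I would reduce local triviality to the construction of a smooth local section. Given a section $\sigma:U\to Q$, $\sigma(b)=(b,\varphi_b)$, over an open neighbourhood $U$ of $b_0$ in $\Emb(\body)$, the map $U\times G\to\beta^{-1}(U)$ sending $(b,\psi)\mapsto (b,\varphi_b\circ\psi)$ is a $G$-equivariant diffeomorphism over $U$, with inverse $(b,\varphi)\mapsto(b,\varphi_b^{-1}\circ\varphi)$; this is exactly the required trivialization. So everything comes down to producing, smoothly in $b$ near $b_0$, a volume-preserving diffeomorphism $\varphi_b:\bulk_{b_0}\to\bulk_b$ that is asymptotically the identity at infinity.

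I would build $\varphi_b$ in two steps. First, extend the body deformation $b\circ b_0^{-1}: b_0(\body)\to b(\body)$ to an ambient diffeomorphism $\Phi_b$ of $\mathbb{R}^d$ equal to the identity outside a fixed compact neighbourhood and depending smoothly on $b$; for $b$ close to $b_0$ this is supplied by a tubular-neighbourhood model together with the smooth, parametrized isotopy extension theorem. Restricting $\Phi_b$ to $\bulk_{b_0}$ yields a diffeomorphism onto $\bulk_b$ which is not yet volume-preserving. Second, correct the Jacobian: writing $\Phi_b^{*}\,d\vol = J_b\,d\vol$, the defect $J_b-1$ is compactly supported, and I would apply Moser's method (the Dacorogna--Moser theorem on domains) to find a correction $\theta_b$ supported in the same compact set, so that $\varphi_b := \theta_b\circ\Phi_b|_{\bulk_{b_0}}$ is volume-preserving; compact support of the correction makes the asymptotic-identity condition automatic.

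The hard part will be this second step, together with the smoothness of $b\mapsto\varphi_b$. Moser's argument succeeds only when the volume defect balances, $\int_{\bulk_{b_0}}(J_b-1)\,d\vol = 0$; since $\Phi_b$ is the identity near infinity this integral equals $\vol(b_0(\body)) - \vol(b(\body))$, so the construction genuinely requires the body deformation to preserve volume --- that is, one must restrict $\beta$ to the physically natural submanifold of volume-respecting embeddings, or build the balancing condition into the definition of $G$. Granting this, the remaining difficulty is purely analytic: solving the divergence equation $\mathrm{div}(v_b) = J_b - 1$ with compactly supported $v_b$ depending smoothly on the parameter $b$, and integrating its flow to obtain $\theta_b$. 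This is where the infinite-dimensional, Fr\'echet nature of $Q$ must be handled with the appropriate tame or convenient calculus; the topological content of the proposition is otherwise routine once the section is in hand.
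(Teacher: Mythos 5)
Your first two paragraphs (freeness, fibre transitivity including the check that $\varphi_1^{-1}\circ\varphi_2$ inherits the asymptotic condition, and the reduction of local triviality to a smooth local section) are correct, and they already contain everything the paper itself proves: the paper's proof merely observes that the $G$-orbit of $(b,\varphi)$ is $\{b\}\times\Diff_{\vol}(\bulk_{b_0},\bulk_{b})$, identifies the orbit set with $\Emb(\body)$, and stops --- local triviality, smoothness, and even the non-emptiness of the fibres are never addressed. So the analytic part of your proposal is work beyond the paper, and that is exactly where it goes wrong.

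The error is the claimed volume obstruction. By insisting that the Moser correction be compactly supported you arrive at the balancing condition $\int_{\bulk_{b_0}}(J_b-1)\,d\vol=0$ and conclude that the proposition ``genuinely requires'' $\vol(b(\body))=\vol(b_0(\body))$. For exterior domains under the paper's decay condition --- only $\lim_{x\to\infty}(\varphi(x)-x)=0$, with no rate --- this is false: the volume defect can be radiated to infinity. Concretely, in $\mathbb{R}^3$ the radial map $\varphi(x)=\bigl(|x|^3+7\bigr)^{1/3}\,x/|x|$ is a volume-preserving diffeomorphism from the exterior of the unit ball onto the exterior of the ball of radius $2$, and $\varphi(x)-x=O(|x|^{-2})\to 0$; so fibres over volume-changing embeddings are non-empty, and your restricted statement is strictly weaker than the proposition. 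The repair stays inside your own scheme: give up compact support and solve the divergence equation with a decaying field, e.g.\ $v_b=\nabla h_b$ with $\Delta h_b=J_b-1$ on $\bulk_{b_0}$ and homogeneous Neumann data, so that $v_b$ is tangent to the boundary and $\nabla h_b=O(|x|^{1-d})$ because $J_b-1$ is compactly supported; on an unbounded domain no compatibility integral is needed (the flux escapes through infinity), the Moser flow is then still asymptotically the identity, and the section $\varphi_b$ exists for every $b$ near $b_0$, volume-respecting or not. Note that this non-emptiness of fibres is required even for the paper's own set-theoretic identification $Q/G\equiv\Emb(\body)$, which silently assumes each $\Diff_{\vol}(\bulk_{b_0},\bulk_{b})$ is non-empty; your construction, once corrected, is what actually supplies it.
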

  \begin{proof}
      The $G$-orbit of $(b,\varphi)$ is the set 
      	\[
      		\{b\} \times \Diff_{\vol}( \bulk_{b_0} , \bulk_{b} ) \subset Q.
 	\]
	  Therefore, by identifying each $G$-orbit, $\{b \} \times \Diff_{\vol}( \bulk_{b_0} , \bulk_{b} )$, with $b \in \Emb(\body)$ it is clear that the set of $G$ orbits is isomorphic to the set $\Emb(\body)$.  Thus we find that $Q / G \equiv \Emb(\body)$ and the quotient map is given by $\beta(b,\varphi) = b$.
  \end{proof}
  
  Now we must define the Lagrangian.  To do this, it is useful to note that the system should be invariant with respect to particle relablings of the fluid and so the Lagrangian should be invariant with respect to the right action of $G$ on $TQ$ given by
  \[
  	(b,\dot{b},\varphi,\dot{\varphi}) \in TQ \mapsto (b,\dot{b}, \varphi \circ \psi , \dot{\varphi} \circ \psi ) \in TQ
\]
 for each $\psi \in G$.  As a result we can define a Lagrangian on the quotient space $TQ / G$.  Incidentally, this quotient space is much closer to the space typically encountered in fluid-structure interaction because the fluid is almost represented by a vector field rather than a diffeomorphism. We say ``almost'' because the fluid velocity at the boundary is generally not tangent to the fluid domain and so this is technically not a vector field.  To be precise, the fluid is represented by a smooth map from the fluid domain to vectors above the fluid domain, i.e. an element of $\Gamma( T_{\bulk_b} \mathbb{R}^3)$.

\begin{prop} \label{prop:TQ/G}
  The quotient space $TQ / G$ is the set
 \begin{align}
 	TQ/G := \{ (b,\dot{b} , u) \quad \vert \quad & (b,\dot{b}) \in T \Emb(\body) , \nonumber \\
	& u \in \Gamma( T_{\bulk_{b}} \mathbb{R}^d ) , \nonumber \\
	& u(b(x)) = \dot{b}(x) \forall x \in \partial \body \\
	& \lim_{x \to \infty}( u(x) ) = 0 \} \label{eq:A}
 \end{align}
 equipped with the bundle projection $\tau( b,\dot{b},u) = b$ and the vector bundle structure
  \[
   (b,\dot{b}_1,u_1) + (b,\dot{b}_2, u_2) = (b, \dot{b}_1 + \dot{b}_2, u_1+u_2) 
 \]
 for all $(b,\dot{b}_1,u_1),(b,\dot{b}_2,u_2) \in \tau^{-1}(b)$ and all $ b \in \Emb(\body)$.
   Additionally, the map $\rho: (TQ/G) \to T \Emb(\body)$ given by $\rho(b,\dot{b},u) = (b,\dot{b})$ is well defined.
\end{prop}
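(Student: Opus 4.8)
The plan is to exhibit the claimed identification as the passage from the material to the spatial representation of the fluid velocity, in exact analogy with Proposition 1 (Arnold), and then to check that this passage is a $G$-invariant vector-bundle isomorphism. Concretely, I would read a tangent vector over $(b,\varphi)\in Q$, as in \S\ref{sec:passive fluids}, as the data of $(b,\dot b)\in T\Emb(\body)$ together with a map $\dot\varphi:\bulk_{b_0}\to T\mathbb{R}^d$ covering $\varphi$, and define
\[
  \Phi(b,\dot b,\varphi,\dot\varphi) = (b,\dot b,\, \dot\varphi\circ\varphi^{-1}).
\]
The spatial field $u:=\dot\varphi\circ\varphi^{-1}$ is then a section of $T_{\bulk_b}\mathbb{R}^d$, so $\Phi$ maps into the set described by the right-hand side of \eqref{eq:A}, and the whole content of the proposition is that $\Phi$ descends to a bijection on the quotient and intertwines the vector-bundle structures.

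The invariance is the key cancellation. Under $\psi\in G$ the point $(b,\dot b,\varphi,\dot\varphi)$ becomes $(b,\dot b,\varphi\circ\psi,\dot\varphi\circ\psi)$, and
\[
  (\dot\varphi\circ\psi)\circ(\varphi\circ\psi)^{-1} = \dot\varphi\circ\psi\circ\psi^{-1}\circ\varphi^{-1} = \dot\varphi\circ\varphi^{-1} = u,
\]
so $\Phi$ is constant on $G$-orbits and factors through $TQ/G$. For injectivity I would take two material vectors over the same $b$ with the same image $u$; setting $\psi:=\varphi_1^{-1}\circ\varphi_2\in G$ one gets $\varphi_2=\varphi_1\circ\psi$ and $\dot\varphi_2 = u\circ\varphi_2 = (u\circ\varphi_1)\circ\psi = \dot\varphi_1\circ\psi$, so the two lie in one $G$-orbit. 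For surjectivity, Proposition \ref{prop:Q} guarantees the fibre $\beta^{-1}(b)$ is a nonempty $G$-torsor; given a target triple I would choose any $\varphi$ over $b$ and set $\dot\varphi:=u\circ\varphi$, which is a legitimate element of $TQ$ since it covers $\varphi$ and is divergence-free in the interior, matching the tangency requirement for $\Diff_{\vol}$.

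The boundary and decay conditions defining the target set should be obtained by differentiating the constraints that define $Q$. The no-slip attachment of fluid to body is the interface identity $\varphi\circ(b_0|_{\partial\body}) = b|_{\partial\body}$; differentiating a curve through $(b,\varphi)$ yields $\dot\varphi(b_0(x)) = \dot b(x)$ for $x\in\partial\body$, and since $\varphi^{-1}(b(x)) = b_0(x)$ this is exactly $u(b(x)) = \dot b(x)$. Similarly, differentiating $\lim_{x\to\infty}(\varphi(x)-x)=0$ gives $\dot\varphi\to 0$, hence $u\to 0$ at infinity. I expect this to be the main obstacle: one must be certain that the interface constraint is the full no-slip identity, so that the matching is of the entire velocity rather than merely its normal component, and one must accept that the existence of $\varphi$ and the differentiation of these limits are being handled only formally in the Fr\'echet category, as the paper has agreed.

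Finally, for the vector-bundle claim I would observe that $G$ acts on $TQ\to Q$ by bundle isomorphisms, since $(\dot b,\dot\varphi)\mapsto(\dot b,\dot\varphi\circ\psi)$ is fibrewise linear; hence $TQ/G$ is a vector bundle over $Q/G\cong\Emb(\body)$. To add two classes over a common $b$ I lift them to vectors over a single shared $\varphi$, which is possible by transitivity of the torsor action, add in $TQ$, and project; because $(\dot\varphi_1+\dot\varphi_2)\circ\varphi^{-1} = u_1+u_2$ this reproduces the stated formula and shows it is independent of representatives, while the homogeneous-linear constraints $u\circ b = \dot b$ and $u\to 0$ guarantee the sum stays in the fibre. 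The map $\rho$ is then simply $T\beta$ pushed to the quotient: $T\beta(b,\dot b,\varphi,\dot\varphi) = (b,\dot b)$ is manifestly $G$-invariant, so it descends and gives $\rho(b,\dot b,u) = (b,\dot b)$ well defined.
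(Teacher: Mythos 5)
Your proposal follows essentially the same route as the paper: the paper's proof also defines the map $(b,\dot b,\varphi,\dot\varphi)\mapsto(b,\dot b,\dot\varphi\circ\varphi^{-1})$ (there called $\pi_{/G}$), checks it is constant on $G$-orbits, identifies each preimage with a single coset, verifies fibrewise linearity so the vector-bundle structure descends, and obtains $\rho$ as $T\beta$ pushed to the quotient via $\rho\circ\pi_{/G}=T\beta$. Your additional steps --- the explicit injectivity/surjectivity argument and the derivation of the no-slip and decay conditions by differentiating the constraints defining $Q$ (including your correct observation that the full velocity matching requires the interface constraint to be the pointwise attachment identity, not merely boundary-to-boundary) --- are refinements of details the paper leaves implicit, not a different method.
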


\begin{proof}
  Let us temporarily label the set in question by ``$\mathcal{A}$'' and define the map $\pi_{/G}: TQ \to \mathcal{A}$ by \[ \pi_{/G}(b,\dot{b},\varphi,\dot{\varphi}) = (b,\dot{b}, \dot{\varphi} \circ \varphi^{-1}).\]  We see that $\pi_{/G}( v \circ \psi) = \pi_{ / G}( v )$ for all $\psi \in G$ and $v \in TQ$.  Therefore, $\pi_{/G}$ maps the coset $v \cdot G$ to a single element of $\mathcal{A}$.  Conversely, given an element $(b,\dot{b}, u) \in \mathcal{A}$ we see that $\pi_{/G}^{-1}(b,\dot{b},u)$ is the set of element in $(b,\dot{b},\varphi , \dot{\varphi}) \in TQ$ such that $u = \dot{\varphi} \circ \varphi^{-1}$.  However, this set of elements is just the coset $v \cdot G$ where $v$ is any element such that $\pi_{/G}(v) = (b,\dot{b},u)$.  Thus $\pi_{/G}$ induces an isomorphism between $TQ/G$ and $\mathcal{A}$.  Additionally we can check that $\pi_{/G}(v+w) = \pi_{/G}(v) + \pi_{/G}(w)$ and $\tau( \pi_{/G}( v ) ) = \tau_{Q}(v) \cdot G$.  Therefore, the desired vector bundle structure is inherited by $TQ/G$ as well and $\pi_{/G}$ becomes a vector bundle morphism.  Finally, the map $\rho(b,\dot{b},u) = (b,\dot{b})$ is merely the map $T\beta : TQ \to T(Q/G)$ divided by $G$.  That is to say $\rho \circ \pi_{G} = T \beta$.  This equation makes $\rho$ well defined because $\beta$ is $G$-invariant.
\end{proof}
  Just to reiterate.  The fluid velocity component $u$ in $(b,\dot{b},u)$ is not a vector-field (strictly speaking) because the boundaries of $u$ may point in directions outside of it's domain, $\bulk_{b}$.  This reflects the fact that the domain of $u$ is time dependent.

  We now define the reduced Lagrangian $l: TQ/G \to \mathbb{R}$ by
  \[
  	l(b,\dot{b},u) = L_{\body}(b,\dot{b}) + \frac{1}{2} \int_{\bulk_{b} }{ \| u(x) \|^2 d\vol(x) }.
  \]
  This induces the standard Lagrangian $L:= l \circ \pi_{/G}: TQ \to \mathbb{R}$ which is $G$-invariant by construction.  Additionally we wish to add a viscous force on the fluid, $F_{\mu} : TQ \to T^{\ast}Q$. We first define the reduced force field $f_{\mu}: TQ/G \to (TQ/G)^{\ast}$ by
  \[
  	\lb f_{\mu}( b, v_b, u) , (b, w_b , w) \rb = \mu \int_{\bulk_{b}}{ \lb \Delta u (x) , w(x) \rb d \vol(x) },
  \]
  and define $F_{\mu}: TQ(b_0) \to T^{\ast}Q(b_0)$ by
  \[
  	\lb F_{\mu}( v ) , w \rb = \lb f_{\mu}( \pi_{/G}(v) ) , \pi_{/G}(w) \rb.
  \]
    We finally define the total force on our system to be $F = F_{\mu} + (F_\body \circ T \beta)$ where $F_\body$ is the dissipative force on the shape of the body mentioned in \S \ref{sec:passive solids}.  This total force $F$ descends via $\pi_{/G}$ to a reduced force $F_{/G} : TQ/G \to (TQ/G)^{\ast}$ where $(TQ/G)^*$ is the dual vector bundle to $TQ/G$.  The reduced force is given explicitly in terms of $f_\mu$ and $F_\body$ by $F_{/G} = f_\mu + (F_\body \circ \rho)$.  One can verify directly from this expression that $\lb F( v) , w \rb = \lb F_{/G}( \pi_{/G}(v) ) , \pi_{/G}(w) \rb$.
    
    We now introduce a consequence which follows from $G$ symmetry of $F$ and $L$.
  
  \begin{prop} \label{prop:evolution1}
    Let $\Flow^t : TQ \to TQ$ denote the time-$t$ evolution operator corresponding to the Lagrange-d'Alembert equations induced by the Lagrangian $L:TQ \to \mathbb{R}$ and the force $F: TQ \to T^{\ast}Q$.  Then there exists a unique evolution operator $\flow^t : TQ/G \to TQ/G$ such that $\flow^t \circ \pi_{/G} = \pi_{/G} \circ \Flow^t$.  In other words, $\flow^t$ is the unique evolution operator which makes the diagram
\[
\begin{CD}
TQ @> \Flow^t >>  TQ\\
@VV{\pi_{/G}}V @VV{\pi_{/G}}V\\
TQ/G @> \flow^t >> TQ/G
\end{CD}
\]
    commute.
  \end{prop}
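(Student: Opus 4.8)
The plan is to prove the proposition by showing that the flow $\Flow^t$ is itself $G$-equivariant, i.e. that $\Flow^t(v \cdot \psi) = \Flow^t(v) \cdot \psi$ for every $\psi \in G$ and $v \in TQ$, and then to read off $\flow^t$ as the map this equivariance induces on the quotient. Equivariance is really the whole content of the statement: once it holds, $\Flow^t$ carries the $G$-orbit through $v$ onto the $G$-orbit through $\Flow^t(v)$, so the assignment $\flow^t(\pi_{/G}(v)) := \pi_{/G}(\Flow^t(v))$ is independent of the representative $v$ and hence well defined, which is exactly the assertion that the square commutes.

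First I would establish equivariance from the two symmetry properties already recorded: the $G$-invariance of $L$ (built in via $L = l \circ \pi_{/G}$) and the $G$-equivariance of $F$. Writing $R_\psi : Q \to Q$ for the right translation $(b,\varphi) \mapsto (b, \varphi \circ \psi)$, its tangent lift $TR_\psi : TQ \to TQ$ is precisely the right action $v \mapsto v \cdot \psi$, and $G$-invariance of $L$ is the identity $L \circ TR_\psi = L$. Substituting $v \cdot \psi$ and $w \cdot \psi$ into the equivariance of $F$ recorded just before the proposition and using $\pi_{/G} \circ TR_\psi = \pi_{/G}$ yields the companion identity $\lb F(v\cdot\psi), w\cdot\psi\rb = \lb F(v), w\rb$. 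Now let $q(\cdot)$ solve the Lagrange-d'Alembert equations and set $q^\psi := R_\psi \circ q$. Because $R_\psi$ is a diffeomorphism of $Q$, composition with $R_\psi$ is a bijection between fixed-endpoint variations of $q$ and of $q^\psi$; combining $L\circ TR_\psi = L$ with the force identity then shows that the Lagrange-d'Alembert principle $\delta S = \int_0^T \lb F, \delta q\rb\,dt$ holds for $q^\psi$ if and only if it holds for $q$. Hence $q^\psi$ is also a solution, and it has initial condition $\dot q(0) \cdot \psi$. By the standing assumption of uniqueness of solutions this forces $\Flow^t(v\cdot\psi) = \Flow^t(v)\cdot\psi$.

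With equivariance in hand the remaining steps are formal. Well-definedness of $\flow^t$ through $\flow^t \circ \pi_{/G} = \pi_{/G}\circ\Flow^t$ is immediate, and uniqueness follows because $\pi_{/G}$ is surjective, so any map satisfying the intertwining relation is pinned down on all of $TQ/G$. One then checks that the one-parameter (semi)group law $\Flow^{t+s} = \Flow^t \circ \Flow^s$ descends, so that $\flow^t$ is genuinely an evolution operator. Equivalently, I could phrase the argument infinitesimally: the Lagrange-d'Alembert equations define a vector field on $TQ$ which the above invariances render $G$-invariant, and a $G$-invariant vector field projects to a well-defined vector field on $TQ/G$ whose flow is $\flow^t$.

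The hard part is not the algebra of equivariance but the analytic setting: $Q$ is an infinite-dimensional Fr\'echet manifold, so the existence, uniqueness, and smoothness of $\Flow^t$ are not automatic. Here I would lean on the paper's standing convention that solutions exist and are unique for all time, so that $\Flow^t$ is available to be reduced in the first place. The one genuinely substantive verification is that the symmetry is a symmetry of the full Lagrange-d'Alembert structure and not merely of $L$: one must confirm that the tangent-lifted action carries admissible variations to admissible variations and leaves the force pairing invariant. Since $G$ acts only on the $\varphi$-slot while fixing $(b,\dot b)$, this bookkeeping is localized to the fluid factor and, given the equivariance of $F$ already in hand, is routine.
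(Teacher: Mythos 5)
Your proposal is correct and follows essentially the same route as the paper: both establish $G$-equivariance of $\Flow^t$ by showing that composing a solution with $\psi \in G$ yields another solution of the Lagrange--d'Alembert principle (using $G$-invariance of $L$, the descent relation $\lb F(v),w\rb = \lb F_{/G}(\pi_{/G}(v)),\pi_{/G}(w)\rb$, and the bijection between fixed-endpoint variations of $q$ and of $q\circ\psi$), and then define $\flow^t$ on cosets. Your explicit appeals to uniqueness of solutions and to surjectivity of $\pi_{/G}$ for uniqueness of $\flow^t$ are only left implicit in the paper, but they are not a different argument.
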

  
  \begin{proof}
    Let $q:[0,t] \to Q$ be a curve such that the time derivative $(q,\dot{q}): [0,t] \to TQ$ is an integral curve of the Lagrange-d'Alembert equations with initial condition $(q,\dot{q})(0)$ and final condition $(q,\dot{q})(t)$.  Then the Lagrange-d'Alembert variational principle states that
    \[
    	\delta \int_{0}^{t}{ L( (q,\dot{q})(\tau)) d\tau } = \int_{0}^{t}{ \lb F((q,\dot{q})(\tau) , \delta q(\tau) \rb d\tau}
    \]
    for all variations of the curve $q( \cdot)$ with fixed endpoints.  Note that for each $\psi \in G$ the action satisfies $\int_{0}^{t}{ L((q,\dot{q})(\tau) d\tau} = \int_{0}^{t}{ L( (q,\dot{q})(\tau) \circ \psi ) d\tau}$ and the variation on the right hand side of the Lagrange-d'Alembert principle is
   \begin{align*}
    	\int_{0}^{t}{ \lb F((q,\dot{q})(\tau)) , \delta q (\tau) \rb d\tau} &= \int_{0}^{t}{ \lb F_{/G}( \pi_{/G}( (q,\dot{q})(\tau) , \pi_{/G}( \delta q (\tau)) \rb d\tau} \\
	&= \int_0^t{ \lb F( (q,\dot{q})(\tau) \circ \psi ) , \delta q(\tau) \circ \psi \rb d\tau}.
   \end{align*}
   Therefore we observe that
   \[
    	\delta \int_{0}^{t}{ L( (q,\dot{q})(\tau) \circ \psi) d\tau } = \int_{0}^{t}{ \lb F((q,\dot{q}) (\tau) \circ \psi) , \delta q (\tau) \circ \psi \rb d\tau}
   \]
   for arbitrary variations of the curve $q( \cdot )$ with fixed end points.  However, the variation $\delta q \circ \psi$ is merely a variation of the curve $q \circ \psi (\cdot)$ becuase
   \[
   	\delta q(\tau) \circ \psi = \left. \pder{}{\epsilon} \right|_{\epsilon = 0}( q(\tau, \epsilon) \circ \psi )
   \]
   and if $q(\tau,\epsilon)$ is a deformation of $q( \tau)$ then $q(\tau,\epsilon) \circ \psi$ is a deformation of $q(\tau) \circ \psi$ by construction.
     Therefore,
   \[
    	\delta \int_{0}^{t}{ L( (q,\dot{q})(\tau) \circ \psi) d\tau} = \int_{0}^{t}{ \lb F((q,\dot{q})(\tau) \circ \psi) , \delta (q \circ \psi) \rb d\tau}
   \]
   for arbitrary variations of the curve $q \circ \psi$ with fixed end points.  This last equation states that the curve $(q,\dot{q}) \circ \psi$ satisfies the Lagrange-d'Alembert principle.  Thus $\Flow^t((q,\dot{q})(0) \circ \psi)=(q,\dot{q})(t) \circ \psi$.  Since $\psi \in G$ was chosen arbitrarily we may apply $\Flow^t$ to the entire coset $(q,\dot{q}) \cdot G$ to find 
   \begin{align}
   	\Flow^t( (q,\dot{q} ) \cdot G) = \Flow^t(q,\dot{q}) \cdot G. \label{eq:Flow_to_flow}
  \end{align}
  This last equation describes a map from cosets to cosets,  that is from $TQ/G$ to $TQ/G$.  In other words, equation \eqref{eq:Flow_to_flow} is the map $\flow^t : TQ/G \to TQ/G$.
  \end{proof}

\subsection{Reduction by frame invariance} \label{sec:SE}
  Consider the group $\SE(d)$ consisting of rotations and translations of $\mathbb{R}^d$.  Each $z \in \SE(d)$ sends $(b,\dot{b}, u) \in TQ/G$ to $(z (b,\dot{b}) , z_* u ) \in TQ/G$ where $z_*u$ is the push-forward of the fluid velocity field to the domain $\bulk( z \cdot b)$.  This action is free and proper on $TQ/G$ so that the projection $[ \cdot ] : TQ/G \to [ TQ / G]$ where
 \[
 	[TQ/G] := \frac{TQ/G}{\SE(d)}
 \]
 is a principal bundle \cite[Prop 4.1.23]{FOM}. Additionally, $\SE(d)$ acts by vector bundle morphisms which are isomorphisms on each fiber so that $[TQ/G]$ inherits the vector-bundle structure from $TQ/G$.
   \begin{prop}
     There exists a function $[l] : [TQ/G] \to \mathbb{R}$ such that $l \circ [\cdot ] = [l]$, a vector-bundle projection $[\tau]: [TQ/G] \to [\Emb(\body)]$, a map $[\rho] : [TQ/G] \to T [\Emb(\body) ]$ such that the diagrams
     \begin{align*}
     \begin{CD}
TQ/G @> [\cdot] >>  [TQ/G] \\
@VV{\tau}V @VV{[\tau]}V\\
\Emb(\body) @> [\cdot] >> [\Emb(\body)]
\end{CD} \quad , \quad
 \begin{CD}
TQ/G @> [\cdot] >>  [TQ/G] \\
@VV{\rho}V @VV{[\rho]}V\\
T\Emb(\body) @> T[\cdot] >> T[\Emb(\body)]
\end{CD}
     \end{align*}
     commute.  Lastly, there exists a force $[F_{/G}] : [TQ/G] \to [TQ/G]^{\ast}$ define by the equation $\lb [F_{/G}]( [ \xi ] ) , [\eta] \rb = \lb F_{/G}(\xi) , \eta \rb$ for any $\xi, \eta \in TQ/G$ above the same base point. 
   \end{prop}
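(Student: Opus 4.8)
The proposition is a descent statement: each of $[l]$, $[\tau]$, $[\rho]$, $[F_{/G}]$ is obtained by pushing an object on $TQ/G$ through the quotient map $[\cdot]: TQ/G \to [TQ/G]$, and the plan is to prove in each case that the object is $\SE(d)$-invariant (for functions and pairings) or $\SE(d)$-equivariant (for the projections), which is exactly the condition for it to descend. The single fact driving every case is that $\SE(d)$ acts on $\mathbb{R}^d$ by isometries preserving $d\vol$: for $z \in \SE(d)$ the map $x \mapsto z\cdot x$ preserves the flat metric and the volume form, so the push-forward $z_*$ preserves pointwise inner products, satisfies the intertwining relation $z_*(\Delta u) = \Delta(z_* u)$, and carries $\bulk_{b}$ diffeomorphically onto $\bulk_{z\cdot b}$. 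I would record this at the outset, since it is the only geometric input needed.

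Granting that fact, I would dispatch the three ``easy'' objects. For $[l]$, I would show $l$ is $\SE(d)$-invariant: the fluid term $\tfrac12\int_{\bulk_{b}} \|u\|^2\,d\vol$ is unchanged under $(b,\dot b,u)\mapsto(z\cdot(b,\dot b), z_* u)$ by the change of variables $x\mapsto z\cdot x$ (the integrand is an inner product, and $d\vol$ is preserved), the body kinetic term is invariant for the same reason, and $U(z\cdot b)=U(b)$ is the $\SE(d)$-invariance assumed in \S\ref{sec:passive solids}; hence $l$ is constant on orbits and defines $[l]$ by $[l]([\xi]) = l(\xi)$. For $[\tau]$ and $[\rho]$, I would check equivariance: $\tau(z\cdot(b,\dot b,u)) = z\cdot b$ shows $\tau$ intertwines the action on $TQ/G$ with the action on $\Emb(\body)$, so it descends to $[\tau]$ and the left square commutes, and it is a vector-bundle projection because (as noted just before the statement) $\SE(d)$ acts by fiberwise linear isomorphisms, so the bundle structure passes to the quotient; similarly $\rho(z\cdot(b,\dot b,u)) = z\cdot(b,\dot b) = Tz\cdot\rho(b,\dot b,u)$ exhibits $\rho$ as equivariant for the tangent-lifted action on $T\Emb(\body)$, giving $[\rho]$ and commutativity of the right square.

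The substantive case is $[F_{/G}]$, since it is a fiber-bilinear pairing rather than a single map. Here I would verify that $(\xi,\eta)\mapsto\lb F_{/G}(\xi),\eta\rb$, for $\xi,\eta$ over a common base point $b$, is invariant under the \emph{simultaneous} action $(\xi,\eta)\mapsto(z\cdot\xi, z\cdot\eta)$. Writing $F_{/G} = f_\mu + (F_\body\circ\rho)$, the viscous part $\mu\int_{\bulk_{b}}\lb\Delta u, w\rb\,d\vol$ is invariant precisely because $z_*$ commutes with $\Delta$, preserves the inner product, and preserves $d\vol$, while the body part is invariant by the assumed $\SE(d)$-invariance of $F_\body$ together with the equivariance of $\rho$ just established. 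Given this, the formula $\lb[F_{/G}]([\xi]),[\eta]\rb := \lb F_{/G}(\xi),\eta\rb$ is well defined: to pair two classes over one point of $[\Emb(\body)]$ I choose representatives over a single $b\in\Emb(\body)$ (possible because their base points lie in one $\SE(d)$-orbit), and any two such choices differ by a single $z\in\SE(d)$ acting simultaneously, so the invariance yields the same value. Fiberwise linearity in $[\eta]$ is inherited from the original pairing, so $[F_{/G}]$ is a genuine bundle map into $[TQ/G]^{\ast}$.

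I expect the well-definedness of $[F_{/G}]$ to be the only real obstacle. Unlike the other three objects, whose descent needs invariance or equivariance of a single map, the force requires both that the bilinear value be unchanged under the simultaneous action \emph{and} that representatives over a common base point always exist and be essentially unique; it is precisely this step that uses that $\SE(d)$ preserves the metric (through $z_*\Delta = \Delta z_*$) and that the action is free and proper, so that no residual stabilizer ambiguity survives the choice of representatives.
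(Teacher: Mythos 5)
Your proposal is correct and follows essentially the same route as the paper's own proof: invariance of $l$ via change of variables plus $\SE(d)$-invariance of $L_{\body}$, equivariance of $\tau$ and $\rho$ for their descent, and invariance of the pairing $\lb F_{/G}(\cdot),\cdot\rb$ under the simultaneous action (using that $z_*$ preserves the metric, the volume form, and commutes with $\Delta$) to define $[F_{/G}]$. Your version is merely a bit more explicit about the well-definedness bookkeeping for $[F_{/G}]$ (choice of representatives over a common base point), which the paper leaves implicit in the phrase ``since $z$ is arbitrary.''
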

   \begin{proof}
     We first show the existence of $[l]$.  Let $z \in \SE(d)$ and $\xi = (b,\dot{b},u) \in TQ/G$.  Note that when $z$ acts on a vector $(x,\delta x) \in T \mathbb{R}^d$ it will preserve the length of the the vector.  That is to say $\| \delta x \|^2 = \| z \cdot \delta x\|^2$.  Thus we see that 
     \begin{align*}
     	l( z  \cdot \xi ) &= L_{\body}(z \cdot (b,\dot{b}) ) + \int_{\bulk(z \cdot b)}{ \| z_*u(x) \|^2 d\vol} \\
		&= L_{\body}(b,\dot{b}) + \int_{\bulk_{b}}{ \| u(x) \|^2 d \vol } \\
		&= l(\xi)
     \end{align*}
     where we have used the change of variables formula and the fact that $L_{\body}$ is $\SE(d)$ invariant by assumption.  Since $z$ is aribtrary we find that applying $l$ to the entire coset $\SE(d) \cdot \xi$ yields a single real number.  This means we may define a function $[l]: [TQ/G] \to \mathbb{R}$ by the condition $l = [l] \circ [\cdot]$.  Continuing in this manner we see that $\tau( z \cdot \xi ) = z \cdot \tau( \xi)$ so that there is a well defined map $[\tau]: [TQ/G] \to [\Emb(\body)]$ defined by the condition $\tau = [\tau] \circ [\cdot]$ and the analogous argument map be applied to the map $\rho: TQ/G \to T\Emb(\body)$ to yield a map $[\rho]: [TQ/G] \to T[\Emb(\body) ]$.  Lastly, we note that
     \begin{align*}
     	\lb F_{/G}( z \cdot \xi) , (z \cdot \eta) \rb &= \lb F_{\body}( z \cdot (b,\dot{b}) , z \cdot (b,\dot{b}' \rb + \int_{\bulk{ z b }}{ \lb \Delta ( z_* u)(x) , z_* u'(x) \rb d\vol} \\
		&= \lb F_{\body}(b,\dot{b}) , (b,\dot{b}') \rb + \int_{\bulk_{b}}{ \lb \Delta u(x) , u'(x) \rb d \vol} \\
		&= \lb F_{/G}(\xi) , \eta \rb.
     \end{align*}
    Where $\xi = (b,\dot{b},u)$ and $\eta = (b,\dot{b}', u')$ are arbitrary element of $TQ/G$ over the same fiber and $z $ is an arbitrary element of $\SE(d)$.  Since $z$ is arbitrary, there must exist a unique function $[F_{/G}] : [TQ/G] \to [TQ/G]^{\ast}$ such that $\lb [F_{/G}]( [\xi]) , [\eta] \rb = \lb F_{/G}(\xi) , \eta \rb$ for arbitrary $\xi,\eta \in TQ/G$.
   \end{proof}
   In the preceding proof we also illustrated a few other things.  In particular, the Lagrangian $L$ and the unreduced force $F$ are also $\SE(d)$ invariant under the left action $z \cdot (b,\varphi) = (z \cdot b , z_* \varphi)$ for each $z \in \SE(d)$ and $(b,\varphi) \in Q$.  Therefore the flow of the system is also invariant with respect to the action of $\SE(d)$ and so we may define a flow on a space which has been quotiented a second time by this left action.
   
   \begin{prop} \label{prop:evolution2}
     Let $\flow^t : TQ/G \to TQ/G$ be the time $t$ evolution operator of Proposition \ref{prop:evolution1}.  There exists an evolution operator $[\flow]^t : [TQ/G] \to [TQ/G]$ such that the following diagram
     \[
     	\begin{CD}
		TQ/G @> \flow^T >> TQ/G \\
		@VV{[\cdot]}V @VV{[\cdot]}V \\
		[TQ/G] @> [\flow]^T >> [TQ/G]
	\end{CD}
     \]
     commutes.
   \end{prop}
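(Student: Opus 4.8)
The plan is to obtain $[\flow]^t$ by descending $\flow^t$ through the $\SE(d)$-quotient, in exact analogy with how $\flow^t$ was obtained from $\Flow^t$ in Proposition~\ref{prop:evolution1}. The whole statement reduces to a single equivariance claim: that $\flow^t$ commutes with the $\SE(d)$-action on $TQ/G$, i.e.\ $\flow^t(z \cdot \xi) = z \cdot \flow^t(\xi)$ for all $z \in \SE(d)$ and $\xi \in TQ/G$. Once this is established, $\flow^t$ carries each $\SE(d)$-orbit into an $\SE(d)$-orbit, so it induces a unique map $[\flow]^t$ on $[TQ/G]$ satisfying $[\flow]^t \circ [\cdot] = [\cdot] \circ \flow^t$, which is precisely the asserted commuting square.

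First I would lift the problem back to $TQ$. The $\SE(d)$-action on $TQ/G$ is the one induced from the left action $z \cdot (b,\varphi) = (z \cdot b, z_* \varphi)$ on $Q$, and hence on $TQ$. Because this action operates on the spatial target of $\varphi$ (left composition by the rigid motion $z$) whereas the $G$-action $(b,\varphi) \mapsto (b, \varphi \circ \psi)$ reparametrizes the material labels (right composition by $\psi$), the two actions commute, since left and right composition of maps never interfere. Consequently the projection $\pi_{/G} : TQ \to TQ/G$ is $\SE(d)$-equivariant, i.e.\ $\pi_{/G}(z \cdot v) = z \cdot \pi_{/G}(v)$.

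Second, I would show that the unreduced evolution operator $\Flow^t$ is itself $\SE(d)$-equivariant. This is a verbatim repetition of the variational argument proving Proposition~\ref{prop:evolution1}, now with $\SE(d)$ in the role played there by $G$: since the discussion following the previous proposition establishes that both the Lagrangian $L$ and the force $F$ are invariant under this left $\SE(d)$-action, and since $z \cdot \delta q$ is a legitimate variation of $z \cdot q$ (as $z$ is a fixed, time-independent rigid motion), the $\SE(d)$-action carries solution curves of the Lagrange-d'Alembert principle to solution curves, giving $\Flow^t(z \cdot v) = z \cdot \Flow^t(v)$. With the two equivariances in hand, the claim for $\flow^t$ is a diagram chase: given $\xi \in TQ/G$, choose any $v$ with $\pi_{/G}(v) = \xi$ and compute
\[
\begin{aligned}
\flow^t(z \cdot \xi) &= \flow^t(\pi_{/G}(z \cdot v)) = \pi_{/G}(\Flow^t(z \cdot v)) \\
&= \pi_{/G}(z \cdot \Flow^t(v)) = z \cdot \pi_{/G}(\Flow^t(v)) = z \cdot \flow^t(\xi),
\end{aligned}
\]
where the second and last equalities use $\flow^t \circ \pi_{/G} = \pi_{/G} \circ \Flow^t$ from Proposition~\ref{prop:evolution1}, the third uses equivariance of $\Flow^t$, and the fourth uses equivariance of $\pi_{/G}$.

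I expect the only genuine subtlety to be the verification that the $\SE(d)$- and $G$-actions commute, since this is what makes the induced $\SE(d)$-action on $TQ/G$ well defined and makes $\pi_{/G}$ equivariant; everything downstream is the same diagram chase already used once. A secondary point worth stating carefully is that the lift $v$ in the chase may be chosen arbitrarily in the fiber $\pi_{/G}^{-1}(\xi)$ precisely because the final expression $z \cdot \flow^t(\xi)$ is independent of that choice, so the induced map $[\flow]^t$ is genuinely well defined on $\SE(d)$-orbits.
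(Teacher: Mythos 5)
Your proposal is correct and rests on exactly the same engine as the paper's proof: the $\SE(d)$-invariance of $L$ and $F$ makes the Lagrange--d'Alembert variational principle carry solution curves to solution curves, so the flow is $\SE(d)$-equivariant and therefore maps $\SE(d)$-orbits to $\SE(d)$-orbits, descending to $[\flow]^t$ on $[TQ/G]$. The only difference is bookkeeping: the paper re-runs the variational argument with both symmetries at once (acting on curves by $z \cdot (q,\dot{q}) \circ \psi$ and then quotienting by $G$ and $\SE(d)$ in turn), whereas you reuse Proposition~\ref{prop:evolution1} as a black box and separately verify $\SE(d)$-equivariance of $\Flow^t$ and of $\pi_{/G}$ --- a tidier factorization of the same argument.
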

  \begin{proof}
    Let $q:[0,T] \to Q$ be a curve such that the time derivative $(q,\dot{q}): [0,T] \to TQ$ is an integral curve of the Lagrange-d'Alembert equations with initial condition $(q,\dot{q})_0$ and final condition $(q,\dot{q})_T$.  Then the Lagrange-d'Alembert variational principle states that
    \[
    	\delta \int_{0}^{T}{ L( q,\dot{q}) dt } = \int_{0}^{T}{ \lb F(q,\dot{q}) , \delta q \rb dt}
    \]
    for all variations of the curve $q( \cdot)$ with fixed endpoints.  Note that for each $\psi \in G$ and $z \in \SE(d)$ the action satisfies $\int_{0}^{T}{ L(q,\dot{q}) dt} = \int_{0}^{T}{ L( z \cdot (q,\dot{q}) \circ \psi ) dt}$ and the variation in the work is
   \begin{align*}
    	\int_{0}^{T}{ \lb F(q,\dot{q}) , \delta q \rb dt} &= \int_{0}^{T}{ \lb F_{/G}(z \cdot  \pi_{/G}( q,\dot{q}) , z \cdot \pi_{/G}( \delta q) \rb dt} \\
	&= \int_0^T{ \lb F( z \cdot (q,\dot{q}) \circ \psi ) , z \cdot \delta q \circ \psi \rb dt}.
   \end{align*}
   Therefore we observe that
   \[
    	\delta \int_{0}^{T}{ L( z \cdot (q,\dot{q}) \circ \psi) dt } = \int_{0}^{T}{ \lb F( z \cdot (q,\dot{q}) \circ \psi) , z \cdot \delta q \circ \psi \rb dt}
   \]
   for arbitrary variations of the curve $q( \cdot )$ with fixed end points.  However, the variation $z \cdot \delta q \circ \psi$ is merely a variation of the curve $z \cdot q \circ \psi (\cdot)$.  Therefore,
   \[
    	\delta \int_{0}^{T}{ L( z \cdot (q,\dot{q}) \circ \psi) dt } = \int_{0}^{T}{ \lb F( z \cdot (q,\dot{q}) \circ \psi) , \delta (z \cdot q \circ \psi) \rb dt}
   \]
   for arbitrary variations of the curve $z \cdot q \circ \psi$ with fixed end points.  This last equation states that the curve $z \cdot (q,\dot{q}) \circ \psi$ satisfies the Lagrange-d'Alembert principle.  Thus $\Flow^T(z \cdot (q,\dot{q})_0 \circ \psi)= z \cdot (q,\dot{q})_T \circ \psi$.  Since $\psi \in G$ was arbitrary we may apply $\Flow^T$ to the entire coset $z \cdot (q,\dot{q}) \cdot G$ to find 
   \[
   	\Flow^T( z \cdot (q,\dot{q} ) \cdot G) = z \cdot \Flow^T(q,\dot{q}) \cdot G.
  \]
  This last equation states that
  \[
  	\flow^T(z \cdot \xi) = z \cdot \flow^T(\xi)
  \]
  where $\xi = \pi_{/G}((q,\dot{q})_0)$.  However, as $z \in SE(d)$ is arbitrary we can multiply by all of $SE(d)$ to get
   \begin{align}
   	\flow^t( \SE(d) \cdot \xi) = \SE(d) \cdot \flow^t(\xi). \label{eq:flow_to_flow}
  \end{align}
  This last equation describes a map from cosets to cosets,  that is from $[TQ/G]$ to $[TQ/G]$.  In other words, equation \eqref{eq:flow_to_flow} is the map $[\flow]^t : [TQ/G] \to [TQ/G]$.
\end{proof}

\section{Asymptotic Behavior} \label{sec:asy}
  Our experience of observing a dumpling floating in a bowl of soup should suggests that the passive motion of the system tends towards one where the fluid is stagnant and the shape of the solids has settled to some minima of the elastic potential energy.  In this section we will confirm this using language presented thus far.

\begin{prop} \label{prop:stable_set}
Let $q:[0,\infty) \to Q$ be a curve such that the time derivative $(q,\dot{q}) : [0, \infty) \to TQ$ is an integral curve of the Lagrange-d'Alembert equations for the Lagrangian $L$ and the force $F$.  Then the $\omega$-limit set of $(q,\dot{q})( \cdot )$ is contained in the set
    \[
      dU^{-1}(0) := \{ (q,0) \in TQ \quad \vert \quad dU(q) = 0 \}.
    \]
\end{prop}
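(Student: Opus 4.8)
The plan is to run a LaSalle-type invariance argument with the total energy serving as a Lyapunov function. First I would introduce the energy $E:TQ\to\mathbb{R}$ associated to the kinetic-minus-potential Lagrangian $L$, namely $E(q,\dot q)=\tfrac12\langle\dot q,\dot q\rangle_Q+U(q)$, where $U$ depends only on the body configuration $b$. Differentiating $E$ along an integral curve of the Lagrange-d'Alembert equations $\tfrac{D\dot q}{Dt}=-\nabla U(q)+\sharp\,F(q,\dot q)$ and using metric-compatibility of the Levi-Civita connection, the potential terms cancel and one is left with the power balance
\[
\frac{d}{dt}E(q,\dot q)=\langle F(q,\dot q),\dot q\rangle .
\]
Since $F=F_\mu+(F_\body\circ T\beta)$ and the two summands pair respectively against the fluid velocity $u$ and the shape velocity $\dot b$, this power splits as $\langle f_\mu(u),u\rangle+\langle F_\body(\dot b),\dot b\rangle$. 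Integrating the viscous term by parts (the boundary contribution at infinity vanishing by the decay condition in \eqref{eq:A}, and the work exchanged across $\partial\body$ already being accounted for by the holonomic coupling $u|_{\partial\body}=\dot b|_{\partial\body}$) gives $\langle f_\mu(u),u\rangle=-\mu\int_{\bulk_b}\|\nabla u\|^2\,d\vol\le 0$, while the assumed definiteness of the shape-dissipation renders the second term non-positive as well. Hence $E$ is non-increasing.

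Next I would extract convergence. Because $U$ is bounded below (it attains an isolated minimum on shape space) and the kinetic energy is non-negative, $E$ is bounded below along the trajectory; being monotone, $E(q,\dot q)(t)$ converges to some value $E_\infty$ as $t\to\infty$. By definition $E\equiv E_\infty$ on the $\omega$-limit set, and the $\omega$-limit set is invariant under the flow $\Flow^t$. Along any trajectory lying entirely in the $\omega$-limit set, therefore, $E$ is constant in time, so the power balance forces $\langle F,\dot q\rangle\equiv 0$ there; that is, $\int_{\bulk_b}\|\nabla u\|^2\,d\vol=0$ and $\langle F_\body(\dot b),\dot b\rangle=0$ for all time.

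The crux is to upgrade this vanishing of the dissipation to the vanishing of the entire velocity $\dot q=(\dot b,\dot\varphi)$. From $\int_{\bulk_b}\|\nabla u\|^2\,d\vol=0$ together with $u\to0$ at infinity I would conclude $u\equiv 0$ throughout the fluid domain; the boundary constraint $u(b(x))=\dot b(x)$ on $\partial\body$ then pins the body velocity to zero along $\partial\body$. The condition $\langle F_\body(\dot b),\dot b\rangle=0$ forces $\dot b$ into the kernel of the shape-dissipation, which by construction contains only the infinitesimal $\SE(d)$ (rigid) motions; but a rigid velocity field vanishing on the hypersurface $\partial\body$ must vanish identically, so $\dot b\equiv 0$ and hence $\dot\varphi\equiv 0$. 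Thus every point of the $\omega$-limit set carries zero velocity, and evaluating the equation of motion at such a point, where the dissipative force obeys $F(q,0)=0$, yields $\nabla U(q)=0$, i.e. $dU(q)=0$. This places the $\omega$-limit set inside $dU^{-1}(0)$.

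I expect the genuine obstacle to be this last upgrade, specifically the interplay of the two dissipation mechanisms: the shape-damping $F_\body$ is only weakly definite and by design ignores rigid motions, so it is the fluid viscosity, acting through the no-penetration coupling $u|_{\partial\body}=\dot b|_{\partial\body}$, that must remove the residual rigid component. A secondary, more analytic concern is that in the Fr\'echet setting of $TQ$ the LaSalle machinery presupposes precompactness of the trajectory in order to guarantee a nonempty, flow-invariant $\omega$-limit set; consistent with the standing conventions of this paper I would take existence of solutions and the requisite compactness as given rather than establish them here.
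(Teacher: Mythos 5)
Your proof follows essentially the same route as the paper's: the energy as a Lyapunov function with $\dot E=\langle F(q,\dot q),\dot q\rangle\le 0$, a LaSalle-type argument placing the $\omega$-limit set in the zero section of $TQ$, and then the equations of motion (with $F(q,0)=0$ and invariance of the $\omega$-limit set) forcing $dU=0$ there. You are in fact more explicit than the paper at the one delicate step --- showing that vanishing dissipation annihilates the \emph{entire} velocity, via the decay of $u$ at infinity, the boundary coupling $u|_{\partial\body}=\dot b|_{\partial\body}$, and the observation that an infinitesimal rigid motion vanishing on the hypersurface $\partial\body$ must vanish identically --- a point the paper compresses into the bare assertion that the dissipation is (weakly) definite on each fiber.
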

\begin{proof}
  Roughly speaking the proof goes as follows.  We find that the energy is a quadratic positive definite funtion.  The time derivative of the energy is negative as along as the velocity is non-zero.  This is due to dissapation from viscous frictions.  The positive-definitness of the energy combined with its negative time derivative will allow us to conclude that the velocity is sent to zero.  That is to say, the system evolves towards the zero section of $TQ$.  We will then find that accelleration on the zero-section vanishes if and only if $dU$.  This means that the system goes to points in the zero section where $dU$ vanishes and concludes the proof.

  We will now proceed to prove this in a more explicit manner. Before we begin we will introduce some handy notation. Given a vector bundle $\pi:V \to M$ there exists a unique section called the \emph{zero-section} which maps each $m \in M$ to the $0$-vector in $V$ above the point $m$.  We will denote this section for an arbitrary vector bundle by $( \cdot )_{\uparrow}^0$.  This point is minor perhaps, becase it is customary to identify the zero-section of a vector bundle with the base of the bundle.  However, to avoid causing confusion we will not make this identification.

  The energy is the function $E: TQ \to \mathbb{R}$ given by
  \begin{align*}
  	E(q,\dot{q}) &:= \lb \mathbb{F}L(q,\dot{q}) , \dot{q} \rb - L(q,\dot{q}) \\
		&= \frac{1}{2} \int_{\body}{ \rho_{\body}(x) \| \dot{b}(x) \|^2 d\vol_{\body}} + \frac{1}{2} \int_{\bulk_{b_0}}{ \| \dot{\varphi}(x) \|^2 d \vol} - U(b)
  \end{align*}
   Given any Lagrangian system on a Riemannian manifold where the Lagrangian is the kinetic energy minus the potential energy, the time derivative of the generalized energy under the evolution of the Lagrange-d'Alembert equations is given by $\dot{E} = \lb  F(\dot{q}) , \dot{q} \rb$.  In this case we find
   \[
   	\dot{E}(q,\dot{q}) = \lb F_{\body}(\dot{b}) , \dot{b} \rb + \lb F_{\mu}( q,\dot{q}), (q,\dot{q}) \rb.
   \]
   However, the right hand side is a (weakly) positive definite quadratic form on each fiber of $TQ$.  Therefore the $\omega$-limit of $(q,\dot{q})( \cdot )$, denoted $M^{\omega}$, must be a subset of the zero section of $TQ$, which is identifiable with $Q$ itself.  In other-words, $M^{\omega} \subset Q$.  Let $(b,\varphi) = q \in M^{\omega}(q,\dot{q})$.  Then the Lagrange-D'Alembert equations is equivalent to
   \[
   	\frac{D \dot{q}}{Dt} = \nabla U(b) + \sharp( F(q,\dot{q}) )
   \]
   where $\sharp:T^{\ast}Q \to TQ$ is the sharp map associated the metric on $Q$.  However, $F(q,\dot{q}) = 0$ when $\dot{q} = 0$, which is the case for point in $M^{\omega}(q,\dot{q})$.  Thus, the evolution of the system is given by the eqaution
   \[
   	\frac{D \dot{q}}{Dt} = \nabla U(b)
   \]
   for points in the zero section of $TQ$.  The covariant derivative takes place above a vector with zero velocity so that $\ddot{q} \in TTQ$ is a vertical vector.  The above equation says that the vertical part of $\ddot{q}$ is given by $\nabla U$.  There the vector field on $TQ$ is transverse to the zero section of when $dU \neq 0$.  Hence we find that we can restrict $M^{\omega}$ further.  That is to say, $M^{\omega} \subset dU^{-1}(0)$.
\end{proof}

\begin{corr} \label{corr:stable_point}
  Let $[U]: [\Emb(\body)] \to \mathbb{R}$ be the unique function on the shape-space of the body such that $[U]( [ b] ) = U(b)$ for all $b \in \Emb(\body)$.  Assume that $[U]$ has a unique minimizer $s_{\min} \in [\Emb(\body)]$.  Then if $(q,\dot{q}): [0,\infty) \to TQ$ is an integral curve of the Lagrange-d'Alembert equations, then $[\xi] (t) = [ \pi_{/G}(q,\dot{q}(t))]$ must approach $(s_{\min})^{0}_{\uparrow} \in [TQ/G]$.  If the flow of the Lagrange-d'Alembert equations is complete, this means that $(s_{\min})^0_{\uparrow}$ is a global (weakly) hyperbolically stable fixed point for the vector field $X_{[TQ/G]}$.
\end{corr}
\begin{proof}
  In proposition \ref{prop:stable_set} we showed that solutions approach points within the set $dU^{-1}(0)$ asymptotically.  This implies that the dynamics on $[TQ/G]$ must approach $d[U]^{-1}(0)$ asymptotically.  However, there is only one such point.
\end{proof}

We can interpret corrollary \ref{corr:stable_point} to mean that ``a motionless body in stagnant water is a stable state for dead fish".  In the next section we will periodically perturb this stable equilibria to obtain a loop in $[TQ/G]$.

\section{Swimming} \label{sec:swimming}
  When one activates one's muscles they do so by changing chemical potentials which then stiffen muscle fibers, causing them to contract.  In terms of Lagrangian mechanics, this has the effect of altering the elastic potential energy of the muscle tissue.  Therefore to model the periodic muscle activation of a fish swimming we may consider a time periodic potential energy $\tilde{U}: S^1 \times [\Emb(\body)] \to \mathbb{R}$, which we define on shape space.  If we add this potential energy to our Lagrangian we do not break any of the symmetries discussed thus far.  However, this does alter the dynamics.  In particular, adding $\tilde{U}$ to the Lagrangian is equivalent to adding a time periodic force $\tilde{F} = d\tilde{U}$ to the equations of motion.  If we let $X_{TQ} \in \mathfrak{X}(TQ)$ denote the vector field for the flow, $\Flow^t$, then the flow in the time-periodic augmented phase space is given by the vector field $(\partial_t , X_{TQ}) \in \mathfrak{X}(S^1 \times TQ)$.  Adding a periodic potential energy effects the dynamics in augmented phase space by the addition of a vector-field $Y_{TQ}: S^1 \times TQ \to TTQ$ \cite[\S 7.8]{MandS}.  In particular, the dynamics of the periodically perturbed system in augmented phase space are given by $(\partial_t , X_{TQ} + Y_{TQ} ) \in \mathfrak{X}(S^1 \times TQ)$.  If $\tilde{U}$ only depends on the shape of the body, then $Y_{TQ}$ is both $G$ and $\SE(d)$ invariant and thre exists a vector field $Y_{[TQ/G]} \in \mathfrak{X}([TQ/G])$ so that the dynamics on $[TQ/G]$ in augmented phase space are given by $(\partial_t , Y_{[TQ/G]} + X_{[TQ/G]}) \in \mathfrak{X}(S^1 \times [TQ/G])$.  Because $X_{[TQ/G]}$ exhibits an asymptotically stable fixed point , $x_0 \in [TQ/G]$, then the orbit $\{ (x_0, t) : t \in S^1 \} \subset S^1 \times [TQ/G]$ must be a (weak) hyperbolic stable limit cycle for the vector field $(\partial_t, X_{[TQ/G]})$.  If we were observing a finite dimensional manifold we could invoke the persistence theorem to state the existence of a perturbed limit cycle in the dynamics of $(\partial_t, X_{[TQ/G]} + Y_{[TQ/G]})$ for sufficiently small oscillations.  However, this final leap is problematic.
  
\subsection{Analytical Issues and a work-around}
    Unfortunately, our manifold is infinite dimensional and Fr\'echet.  I am unable to assert anything with absolute certainty on the existence of limit cycles in $[TQ/G]$ for arbitrary perturbations.  Specifically, the strength of the stable fixed point of $x_0$ is tied to the spectrum of the Laplacian operator on $\bulk_b$, which is strictly negative but approaches $0$.  In the language of \cite{Fenichel1971}, there is no spectral gap condition.  Secondly, even if there were a spectral gap, the existence of a limit cycle involves fixed point theorems which assume that our space is complete.

    However, perhaps these problems are not so devastating. There exists a number of finite dimensional models for the space $TQ/G$ used by engineers to study fluid structure interaction.  It is fairly common to approximate the fluid velocity field on a finite dimensional space and model the solid using a finite element method.  For example consider the immersed boundary method \cite{Peskin2002}.  Let us call this space $(TQ/G)_{\discrete}$.  Moreover, usually one can act on $(TQ/G)_{\discrete}$ by $\SE(d)$ simply by rotating and translating the tetrahedra in the case of finite element methods, or rotating the basis functions for a spectral method.  If the model on $(TQ/G)_{\discrete}$ converges as the time step goes to zero then we could reasonably restrict ourselves to methods  which dissipate energy at a rate which is quadratic and positive definite in velocity.  This is not too much to expect because a good method ought to converge and \cite{Peskin2002} or \cite{Monaghan1977} are both methods which exhibit this property.  By the same arguments as before the dynamics will exhibit a hyperbolically stable equilibria on the quotient space $\frac{ (TQ/G)_{\discrete} }{\SE(d)}$.  Upon adding a periodic perturbation to the dynamics on $\frac{ (TQ/G)_{\discrete} }{\SE(d)}$ one could apply the persistence theorem directly to assert the existence of a hyperbolically stable limit cycle in augmented phase space.  If the model on $\frac{ (TQ/G)_{\discrete} }{\SE(d) }$ converges as the grid size goes to zero, then for a finite grid resolution there is a trajectory in $TQ/G$ which is well approximated by the definition of the term ``convergence''.  Therefore one could say that this limit cycle approximates the real dynamics, although perhaps only on a finite time span.  That is to say, there is a loop in $[TQ/G]$ which nearly satisfies the exact dynamics.  Therefore instead of searching for conditions under which a limit cycle exists for the exact dynamics we shall explore the consequence of evolving on loops in $[TQ/G]$ which \emph{approximate} the exact dynamics.  Given one of these loops we will be able to interpret regular motion (a.k.a swimming) by the use of phase reconstruction formulas induced by lifting the loop in $[TQ/G]$ to a path in $TQ/G$.
 
\subsection{Swimming as a limit cycle}

\begin{figure}[h!]
  \centering
  \includegraphics[width=4in]{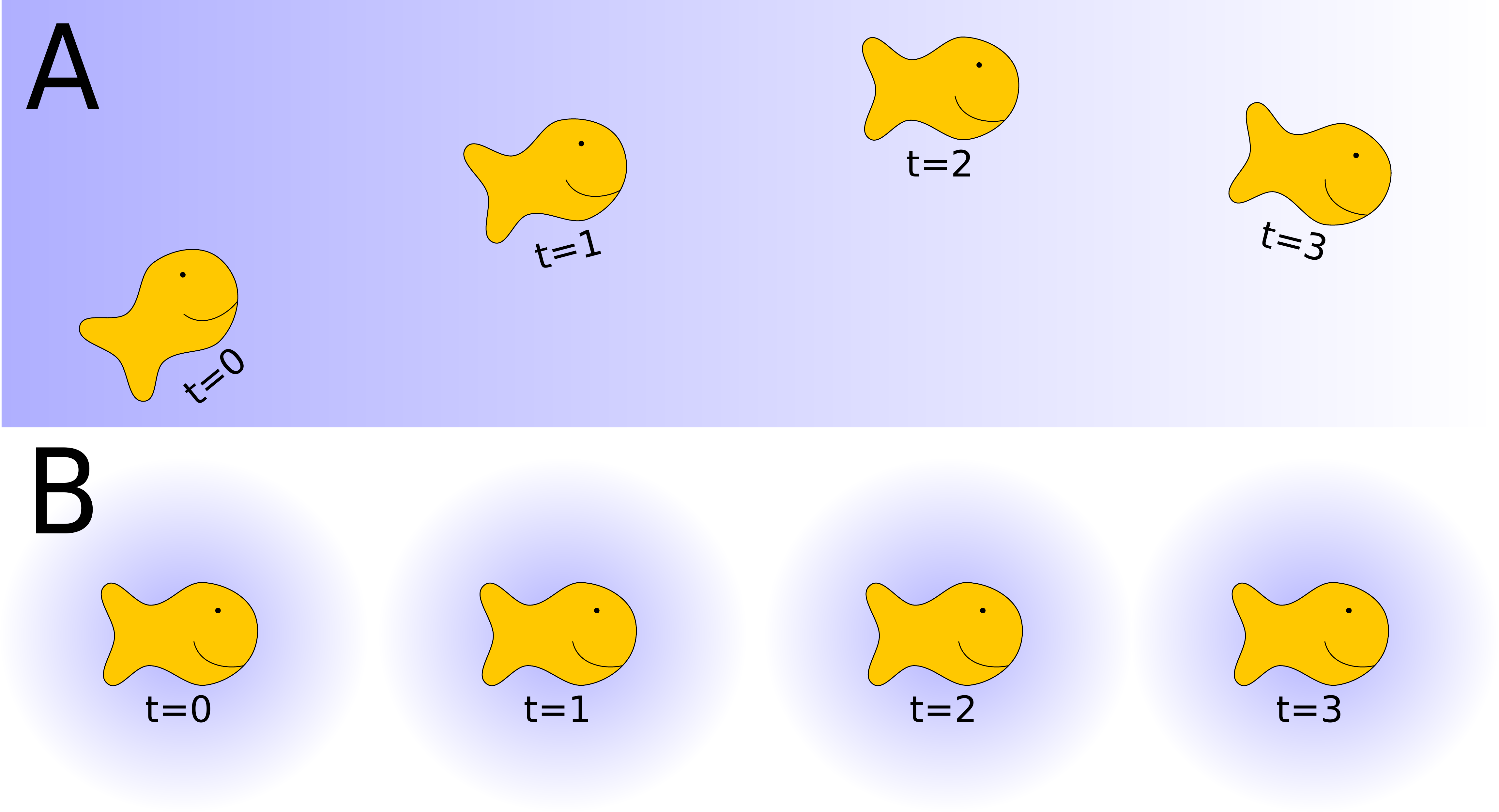}
  \caption{The bottom row (B) depicts snapshots of a periodic trajectory in $[TQ/G]$.  As the trajectory is periodic, all the snapshots are identical.   The top row (A) is a lift of the trajectory in (B) to $TQ/G$.  The top row appears to be steady swimmming.  Each snapshot is related to the previous by a (fixed) rigid rotation and translation.}
  \label{fig:snapshots}
\end{figure}

Let  $[\gamma]: S^1 \to [TQ/G]$ be a loop such that $\graph([\gamma]) := \{ (t, [\gamma](t) ) : t \in S^1\}$ is approximates the dynamics in the augmented phase space $S^1 \times [TQ/G]$.  There must exists a curve $\gamma^{(1)} : [0 , 1] \to TQ/G$ such that $[\gamma^{(1)}(t)] = [\gamma](t)$ for $t \in [0,1]$.  Additionally, since $[\gamma](0) = [\gamma](1)$ there must exists a $z \in \SE(d)$ such that $\gamma^{(1)}(1) = z \cdot \gamma^{(1)}(0)$.  However, we see that $z \cdot \gamma^{(1)}(t)$ is a smooth curve which projects to the loop $[\gamma]$.  Therefore, by concatenating the curves $\gamma^{(1)}( \cdot )$ and $z \cdot \gamma^{(1)}( \cdot)$ we can get a longer integral curve $\gamma^{(2)} : [0,2] \to TQ/G$ defined by
\[
	\gamma^{(2)}(t) = \begin{cases}
		\gamma^{(1)}(t) &\text{ for } t \in [0,1] \\
		z \cdot \gamma^{(1)}(t-1) &\text{ for } t \in (1,2]
		\end{cases}.
\]
Note that $\gamma^{(2)}(2) = z \cdot \gamma^{(1)}(1) = z^2 \cdot \gamma^{(1)}(0)$.  Observing that $z \gamma^{(2)}(\cdot)$ projects to $[\gamma]$ as well we can extend $\gamma^{(2)}( \cdot)$ to get a curve $\gamma^{(3)} : [0 , 3] \to TQ/G$.  By induction we can extend this argument indefinetly to get a curve for all positive time
\[
	\gamma(t) = z^{ \lfloor t \rfloor } \cdot \gamma^{(1)}(t - \lfloor t \rfloor)
\]
where $\lfloor t \rfloor = \sup \{ k \in \mathbb{Z} : k \leq t \}$.   One way to interpret this visually is as follows. If one were to take a snap shot of the body immersed in the fluid every second, then one would observe the same picture over and over again, except each picture would be translated and rotate by the fixed element $z \in \SE(d)$ (see figure \ref{fig:snapshots}).  Moreover, since $[\gamma]( \cdot )$ is a  \emph{stable} limit cycle, this behavior is stable and so we should expect to observe it in real systems (see figure \ref{fig:jellyfish}).  Of course, it is possible that the reconstructed phase-shift $z \in \SE(d)$ is simply the identity, which would mean there has been no net motion after one flap of the fins.  The main point is that this regular behavior is stable and generally $z$ will not be the identity.  Constructing ways of controlling the phase-shift $z$ will certainly bring us into future work, which is the topic of the next section. 

\section{Conclusion and Future work}
  It is widely observed that steady swimming is periodic, and this observation inspired the question ``is it possible to interpret swimming as a limit cycle?''.  In this paper we have answered this question in the affirmative.  We did this by carefully constructing the phase space and then reducing it by the appropriate symmetries.  We arrived at a system where we could invoke the persistence theorem to construct loops that approximate the dynamics in the reduced phase space.  Lifting these loops by phase reconstruction formulas allowed us to assert the existence of stable trajectories which strongly resemble swimming.  The stability and the regularity of these trajectories were important features, as both of these features are observed in real systems.
  
    Given the complexity of fluid-structure interaction it is not immediately clear that one should have expected such orderly behavior.  This orderliness has the potential to be exploited in a number of applications.

\begin{enumerate}
\item {\bf Robotics and Optimal Control}
  The interpreation of swimming as a limit cycles may permit a new paradigm for controller design.  For example, if we assume that we have a potential energy which is a sum of two parts $U([b]) + U_{c}([b])$ where the $U_c$ term depends on a control parameter $c \in C$. We can denote the set of loops in $C$ by $\mathrm{loop}(C)$ then we may consider the map $\Gamma: \mathrm{loop}(C) \to \mathrm{loop}( [TQ/G] )$ which outputs the periodic limit cycle in $[TQ/G]$ which results from using a loop in $C$.  We may use $\Gamma$ to define our cost function on the space $\mathrm{loop}(C)$.  Such a cost functional would be one which is sensitive to the long term behavior and does not over-react to the transient dynamics.
 
\item {\bf Transient dynamics} Even though trajectories may approach a limit cycle, the transient dynamics will matter.  The transient dynamics will re-orient and translate the body before any orderly periodic behavior kicks in.  Therefore, if one desires to create locomotion through periodic control inputs, one should try to get onto a limit cycle quickly in order to minimize the duration where the complicated transient dynamics dominate.  Both these areas can be studied if and only if the map $\Gamma$ exists.  A main contribution of this paper a demonstration of an ``approximate'' map $\Gamma$.

\item {\bf Pumping}
  In the current setup one could consider a reference frame attached to the body.  In this reference frame ``swimming'' manifests as fluid moving around the body in a regular fashion.  This change in our frame in reference is really a description of pumping and suggests the stability of fluid pump may be proven in a similar manner.

\item {\bf Passive Dynamics}
  This paper does \emph{not} address the dual problem.  By the dual problem we mean: ``Given a constant fluid velocity at infinity, what periodic motion (if any) will a tethered body approach as time goes to infinity?''  In this dual problem, the motion of the body is given first, and parameters such as the period of the limit cycle are emergent phenomena.  In particular, the dual problem of a flapping flag immersed in a fluid with a constant velocity at infinity has received much attention in the applied mathematics community (see \cite{Shelley2005} and references therein).  It is aboserved that stable periodic motion occurs for a range of velocities at infinity while resonant modes of the flag lead to chaotic behavior in certain regions.  This problem has been studied from a variety of angles already.  However, it may be insightful to view it in a manner similar to the analysis presented in this paper.  Such an interpretation could help generalize well known results which would otherwise be specific to flapping flags.

\item {\bf Other types of locomotion}
  The notion that walking may be viewed as a limit cycle has been around for a while (see \cite{Hobbelen} and references therein).  Moreover, it is conceivable that flapping flight is a limit cycle as well.  However, for both these systems $\SE(3)$ symmetry is broken by the direction of gravity.  Because of this, it is not immediately clear that one can import the methods used here to understand flapping flight and terrestrial locomotion.  However, perhaps this is merely a challenge to be overcome.  In particular, these systems still exhibit $\SE(2)$ symmetry.  So it is conceivable that there exists a fixed point for a reduced system which is hyperbolic but with an unstable mode.  Using the same construction one can infer the existence of a periodic orbit which is not a limit cycle.  It would be interesting to see if we could describe the unstable directions physically and inspire controls based upon these insights.
\end{enumerate}
\bibliographystyle{amsalpha}

\bibliography{hoj_swimming}

\end{document}